\documentclass[11pt,a4paper,twoside]{amsart}
\usepackage{amsmath,amssymb,amsfonts,amsthm}
\usepackage{hyperref}
\usepackage{lineno}


\setcounter{MaxMatrixCols}{10}

\def\K{{\mathcal{K}}}
\def\m{\stackrel{1}{M}}
\def\mm{\stackrel{2}{M}}

\def\K{\stackrel{1}{K}}
\def\KK{\stackrel{2}{K}}
\def\KKK{\stackrel{k}{K}}
\def\st{\stackrel}

\def\l{\lambda}
\def\to{\longrightarrow}
\def\mto{\longmapsto}
\def\a{\alpha}
\def\b{\beta}
\def\e{\eta}
\def\g{\gamma}
\def\p{\phi}
\def\P{\Phi}
\def\G{\Gamma}
\def\s{\psi}
\def\S{\Psi}

\def\eps{\epsilon}
\def\r{\rho}
\def\o{\circ}

\def \dar{\times}
\def \lim{\varprojlim}

\def\N{\mathbb{N}}

\def\F{\mathbb{F}}
\def\R{\mathbb{R}}
\def\E{\mathbb{E}}

\newtheorem{The}{Theorem}[section]
\newtheorem{Pro}[The]{Proposition}
\newtheorem{Lem}[The]{Lemma}
\newtheorem{Cor}[The]{Corollary}
\theoremstyle{definition}
\newtheorem{Def}[The]{Definition}
\newtheorem{Rem}[The]{Remark}
\newtheorem{Examp}[The]{Example}
\thanks{2010 Mathematical Subject Classification.
   Primary 58B20; Secondary 58A05.}

\begin{document}
\title{Isomorphism classes for higher order tangent bundles}
\author{Ali Suri }
\address{Department  of Mathematics, Faculty of sciences \\
Bu-Ali Sina University, Hamedan 65178, Iran.}
\email{ali.suri@gmail.com \& a.suri@math.iut.ac.ir \& a.suri@basu.ac.ir}
\maketitle {\hspace{2.5cm}}

\begin{abstract}
The tangent bundle $T^kM$ of order $k$,  of a smooth Banach
manifold $M$ consists of all equivalent classes of curves that
agree up to their accelerations of order $k$.
In the previous work of the author he proved that $T^kM$, $1\leq k\leq \infty$, admits a vector bundle structure on $M$ if and only if  $M $ is endowed with a linear connection or equivalently a connection map on $T^kM$ is defined. This bundle structure  depends heavily on the choice of the connection.
In this paper we ask about the extent to which this vector bundle  structure remains isomorphic. To this end we define the notion of the $k$'th order differential $T^kg:T^kM\longrightarrow T^kN$ for a given differentiable map $g$ between manifolds $M$ and $N$. As we shall see, $T^kg$ becomes a vector bundle morphism if the base manifolds are endowed with $g$-related connections. In particular, replacing a connection with a $g$-related one, where $g:M\longrightarrow M$ is a diffeomorphism, follows invariant vector bundle structures.
Finally, using immersions on Hilbert manifolds, convex combination of connection maps and manifold of $C^r$ maps we offer three  examples to support our theory and reveal its interaction  with the known problems such as Sasaki lift of metrics.

\textbf{Keywords}: Banach manifold; Hilbert manifold;
Linear connection; Connection map; Related connection, Higher order tangent bundle;
Fr\'{e}chet manifold; lifting of Riemannian metrics.
\end{abstract}

\pagestyle{headings} \markright{Isomorphism classes for higher order tangent bundles}
\tableofcontents

\section{Introduction}

The tangent bundle   of order k, $T^kM$,  of a smooth  manifold $M$ consists of all equivalent classes of curves that agree up to their accelerations of order k. This bundle is a natural extension of the notion of the usual tangent bundle (see e.g. \cite{Dod-Gal, Miron, Morimoto, Yano}). For example in classical mechanics, $T^kM$ describes the Generalized Particle Mechanics in the autonomous sense \cite{Leon}.

A vector bundle structure for $T^kM$, $2\leq k\leq \infty$, even for $k=2$  is not as evident as in the case of tangent bundle $TM$. In fact it is not always possible to consider $T^kM$ as a vector bundle over $M$ \cite{Dod-Gal, ali, Suri Osck}.

The author in his previous work \cite{Suri Osck} proved that at the presence of a linear connection on $M$ (or equivalently a connection map on $T^kM$), $T^kM$, $2\leq k\leq \infty$,  admits a vector bundle structure on $M$. Moreover it is shown that every linear connection (or equivalently  every Riemannian metric) on $M$,  induces a connection map on $T^kM$.

As an immediate consequence, our suggested  vector bundle structure  allows us
to solve an old problem of differential geometry formulated by Bianchi and
Bompiani  \cite{Miron}, namely the problem of  prolongation
of a Riemannian metric defined on the base manifold $M$ to   $T^kM$,  even for infinite dimensional Hilbertable manifolds \cite{Suri Osck}.

However, as one may have expected, these vector bundle structures depend crucially on the particular connection  chosen \cite{Dod-Gal, Dod-Gal-Vass, ali, Suri Osck}.

In this paper we ask about the
extent of  this vector bundle dependence. We will show that this dependence  is closely related to the notion of \textit{related connection maps} (or conjugate connections) which will be used for a classification of these vector bundle structures.  More precisely we  introduce the higher order differential $T^kg$ of a smooth map $ g:M\to N$ between two manifolds $M$ and $N$ and we investigate when $T^kg$ is linear on fibres. Linearity of $T^k_xg$, $x\in M$, allows us to build a vector bundle morphism $T^kg:T^kM\to T^kN$ (see \cite{ali} and \cite{Dod-Gal-Vass} for the special case $k=2$). As a consequence, we  show that the vector bundle structure on $T^kM$, defined by the aim of a  connection map, remains invariant (isomorphic) if it is replaced by a $g$-related connection map, for some diffeomorphism $g:M\to M$.

If we take one step further by considering $T^\infty M$ and $T^\infty N$, as generalized Fr\'{e}chet vector bundles over $M$ and $N$ respectively (\cite{Suri Osck}), then proving $T^\infty g$ to be a generalized vector bundle morphism becomes much more complicated.
More precisely the set of linear maps between Fr\'{e}chet spaces (the fibre types  of $(T^\infty M,\pi^\infty_M, M)$ and $(T^\infty N,\pi^\infty_N, N)$) does not remain in the category of Fr\'{e}chet spaces \cite{Hamilton, Sch}.
To get around this difficulty, we  employ the projective limit methodology, as in \cite{split, GAl-VB, Suri Osck, Suri Hfbk} etc., to show that $(T^\infty g,g)$  becomes a  generalized vector bundle morphism at the presence of $g$-related connections on $M$ and $N$.

Afterward, as an application, we  settle our results to the special case of $f:M\to N$, where $f$ is an immersion and $N$ is a Riemannian Hilbert manifold. As a result, this special case tells us that higher order differential of an isometry   is again an isometry with respect to the induced Sasaki-type metrics.

Then we check the vector bundle dependence on convex combination  of connection maps.

We close this article by addressing, in part, the case of
manifold of $C^r$ maps  between manifolds $M$ and $N$ denoted by $C^r(N,M)$.

Through this paper all the maps and manifolds are assumed to be smooth,
but, except  in section \ref{section T infty g}, a lesser degree of differentiability can be assumed.

The readers who are unfamiliar with infinite dimensional manifolds and spaces, can easily replace the model spaces with  Euclidean spaces.

Most of the results of this paper are novel even for the case of finite dimensional manifolds.
%
%
\section{Preliminaries}\label{Sec preliminaries}
In this section we summarize  the necessary preliminary materials that we need for a
self contained presentation of our paper and we record our notation.

At various points in this article, we  wish to have an  explicit formula for
higher order differentials of  compositions  of smooth functions. Hence, we begin with a short description of higher order chain rule. Let $f:\mathbb{I}\subseteq\R\to U$ and $g:U\to V$ be $k$-times Fr\'{e}chet differentiable where $U$ and $V$ are open subsets of the Banach spaces $\E$ and $\E'$ respectively. Then it is known that $g\o f$ is $k$-times Fr\'{e}chet differentiable and
\begin{eqnarray}\label{higher order chain rule}
&&(g\o f)^{(k)}(0)=\\
\nonumber &&\sum \frac{k!}{l_1!\dots l_i !m_1!\dots m_k!}d^ig(f(0))\big(f^{(l_1)}(0),\dots,f^{(l_i)}(0) \big)
\end{eqnarray}
where the second sum is over all ordered $i$-tuples $(l_1,\dots,l_i)$ of integers $l_1,\dots,l_i$
such that $l_1+\dots+l_i=k$ and  $1\leq l_1\leq\dots\leq l_i\leq k$ with $i$ varying form $1$ to $k$. Moreover for
any $j\in\{1,\dots k\}$, $m_j$ is the number of $l_1,\dots, l_i$ equal to $j$  (\cite{Averbuh}, p. 234, \cite{Lloyd}, p. 359 or \cite{Schwartz}, p. 262).
The coefficient $\frac{k!}{l_1!\dots l_i! m_1!\dots m_k!}$ will henceforth be denoted by $a_{(l_1,\dots,l_i)}^k.$

We proceed  with a short description of  infinite-dimensional manifolds
and their tangent bundles.
Let M be a manifold modeled on
the Banach space $\E$. For any $x_0\in M$ define
\begin{equation*}
C_{x_0}:=\{\g:(-\eps,\eps)\to M~;~ \g(0)=x_0  ~ \textrm{and}~ \g
~\textrm{is smooth }\}.
\end{equation*}
As a natural extension of the  tangent bundle $TM$ define the
following equivalence relation. For $\g\in C_{x_0}$, set $\g^{(1)}(t)=\g'(t)$ and  $\g^{(k)}(t)={\g^{(k-1)}}'(t)$ where $k\in\N$ and $k\geq 2$. Two curves  $\g_1,\g_2 \in C_{x_0}$
are said to be $k$-equivalent, denoted by $\g_1\approx_{x_0}^k\g_2$,
if and only if $\g_1^{(j)}(0)=\g_2^{(j)}(0)$  for all $1\leq j\leq k$.
Define ${T}^k_{x_0}M:=C_{x_0}/\approx_{x_0}^k$ and the \textbf{
tangent bundle of order  $k$ or $k$-osculating bundle} of M to be
${T}^kM:=\bigcup_{x\in M}{T}^k_{x}M$. We denote by $[\g,{x_0}]_k$ the
representative of the equivalence class containing $\g$  and define
the canonical projection ${\pi}_M^k:T^kM\to M$ which projects
$[\g,{x_0}]_k$ onto $ x_0$.

Let $\mathcal{A}=\{(U_\a,\phi_\a)\}_{\a\in I}$ be a $C^\infty$ atlas
for $M$. For any $\a\in I$ define
\begin{eqnarray*}
\phi_\a^k:{\pi_M^k}^{-1}(U_\a)&\to& \p_\a(U_\a)\times\E^k\\
{[\gamma,{x_0}]_k}&\mto& \big( (\p_\a
\circ\g)(0),(\p_\a\circ\g)'(0),...,\frac{1}{k!}(\p_\a\circ\g)^{(k)}(0)\big)
\end{eqnarray*}
%
%
\begin{The}\label{fibre bundle structure for TkM}
 The family
$\mathcal{A}_k=\{({\pi_M^k}^{-1}(U_\a),\phi_\a^k)\}_{\a\in I}$ declares
a smooth fibre bundle (not generally a vector bundle)  structure for $T^kM$ over $M$ \cite{Suri Osck}.
\end{The}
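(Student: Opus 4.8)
The plan is to verify the two ingredients of a smooth fibre bundle: that each $\phi_\a^k$ is a well-defined bijection onto its image $\p_\a(U_\a)\times\E^k$, and that the transition maps $\phi_\b^k\circ(\phi_\a^k)^{-1}$ are smooth diffeomorphisms between open subsets of $\E\times\E^k$; the final clause (that this is not in general a vector bundle) is addressed by observing that the transition maps are \emph{affine but not linear} on the fibres, which one sees directly from the explicit formula. First I would check well-definedness: if $\g_1\approx_{x_0}^k\g_2$ then by definition $\g_1^{(j)}(0)=\g_2^{(j)}(0)$ for $1\le j\le k$ in any chart, so in particular $(\p_\a\circ\g_1)^{(j)}(0)=(\p_\a\circ\g_2)^{(j)}(0)$ for all such $j$ (this is itself a consequence of the chain rule \eqref{higher order chain rule} applied to $\p_\a\circ\g$, since the derivatives of $\p_\a\circ\g$ at $0$ are polynomial expressions in the $\g^{(l)}(0)$), hence $\phi_\a^k$ does not depend on the chosen representative. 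Injectivity follows because the data $\big((\p_\a\circ\g)(0),\dots,\tfrac1{k!}(\p_\a\circ\g)^{(k)}(0)\big)$ determines the $k$-jet of $\p_\a\circ\g$ at $0$, hence the equivalence class $[\g,x_0]_k$; surjectivity follows by exhibiting, for any $(u,v_1,\dots,v_k)\in\p_\a(U_\a)\times\E^k$, the polynomial curve $t\mapsto\p_\a^{-1}(u+tv_1+\dots+t^kv_k)$, which is defined for small $t$ and whose image under $\phi_\a^k$ is the prescribed tuple.

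Next I would compute the transition maps. Writing $\sigma=\p_\b\circ\p_\a^{-1}$ for the change-of-chart diffeomorphism on the overlap, the $k$-jet at $0$ of $\p_\b\circ\g=\sigma\circ(\p_\a\circ\g)$ is obtained from the $k$-jet of $\p_\a\circ\g$ by the higher order chain rule \eqref{higher order chain rule}. Concretely, setting $x^{(j)}=(\p_\a\circ\g)^{(j)}(0)$, the $m$-th component $\tfrac1{m!}(\p_\b\circ\g)^{(m)}(0)$ is a universal expression of the form
\begin{equation*}
\sigma(x^{(0)}),\qquad \frac1{m!}\sum a^m_{(l_1,\dots,l_i)}\, d^i\sigma(x^{(0)})\big(x^{(l_1)},\dots,x^{(l_i)}\big)\quad (1\le m\le k),
\end{equation*}
where the sum runs over the index set described after \eqref{higher order chain rule}. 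Each such expression is a composition of $\sigma$ and its derivatives (smooth, since $\sigma$ is a diffeomorphism between open subsets of $\E$) with multilinear evaluations, hence is smooth in $(x^{(0)},x^{(1)},\dots,x^{(k)})$; the same argument applied to $\sigma^{-1}$ gives smoothness of the inverse transition map, so $\phi_\b^k\circ(\phi_\a^k)^{-1}$ is a diffeomorphism. This establishes the fibre bundle structure with fibre type $\E^k$ and structure "group" the diffeomorphisms of $\E^k$ arising this way.

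For the parenthetical assertion that $T^kM$ is not generally a vector bundle, I would point out that in the displayed transition formula the $m$-th component for $m\ge 2$ contains the term $d\sigma(x^{(0)})(x^{(m)})$ (linear in the top-order datum) \emph{plus} lower-order terms such as $\tfrac1{m!}d^2\sigma(x^{(0)})(x^{(l_1)},x^{(l_2)})$ with $l_1+l_2=m$, which are genuinely nonlinear (quadratic and higher) in $(x^{(1)},\dots,x^{(m-1)})$ whenever $d^2\sigma\ne 0$; hence the transition maps are not fibre-wise linear, so the atlas $\mathcal A_k$ does not define a vector bundle. For $k=1$ this obstruction disappears — the only component is $d\sigma(x^{(0)})(x^{(1)})$ — recovering the usual linearity of $TM$, which is consistent with the statement.

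The routine parts are the well-definedness and the smooth-compatibility bookkeeping; the main point requiring care — and the step I expect to be the real work — is organizing the higher order chain rule \eqref{higher order chain rule} so that the transition maps are seen to be \emph{polynomial} in the fibre variables with smooth coefficients depending on $x^{(0)}$, because it is precisely the structure of these polynomials (affine in the top variable, nonlinear below) that simultaneously yields smoothness of the transitions and the failure of vector-bundle-ness; everything else is standard manifold chart-patching. Since this is exactly the content of \cite{Suri Osck}, I would also simply cite that reference for the detailed verification.
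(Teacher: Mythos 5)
The paper offers no proof of this theorem at all --- it is simply quoted from \cite{Suri Osck} --- but your argument is correct and is precisely the standard chart-patching via the higher order chain rule (\ref{higher order chain rule}) that the cited work carries out and that the paper itself implicitly relies on (cf.\ the remark about ``complicated nonlinear transition functions'' preceding Theorem \ref{osc k admits a vb}). One small slip: for $k\ge 2$ the fibre transition maps are polynomial of degree up to $k$ in the fibre variables, not ``affine but not linear'' as you first announce; your later description (affine in the top-order variable, genuinely nonlinear --- quadratic and higher --- in the lower-order ones) is the accurate statement, and it is the one your non-vector-bundle conclusion actually uses.
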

%
%
Consider the $C^\infty(T^kM)$-linear map $\mathbb{J}  :\mathfrak{X}(T^kM)\to \mathfrak{X}(T^kM)$  s.t. locally on a chart $(\p_\a^k,{\pi_M^k}^{-1}(U_\a))$, $\mathbb{J}$ is given by\\
$$\mathbb{J}_{\a}(u;y,\e_1,\dots,\e_k)=(u;0,y,\e_1,\dots,\e_{k-1}).$$
for any $u:=(x,\xi_1,\dots,\xi_k)\in T^kM$ and every
$(u;y,\e_1,\dots,\e_k)\in T_uT^kM$.
\begin{Def}\label{Def Connection map}
A connection map on ${T}^kM$
is a vector bundle morphism  
$$K=(\K ,\KK...,\KKK):T{T}^kM\to\Big(\oplus_{i=1}^kTM,\oplus_{i=1}^k\pi_M^1,\oplus_{i=1}^kM\Big)\vspace{-2mm}$$
such that for any $1\leq a\leq k-1$, $\KKK\o
\mathbb{J}^a=\stackrel{k-a}{K}$ and $\KKK\o \mathbb{J}^k={\pi_M^k}_*$ \cite{Ioan, Suri Osck}.
\end{Def}
In order to carry out the local structure  of a connection map, we state the following lemma according to \cite{Suri Osck}.
\begin{Lem}\label{local form of a connection map}  Locally on a chart $({\pi^k_M}^{-1}(U_\a),\phi_\a^k)$
there are smooth maps $\stackrel{i}{M}_\a:U_\a\dar\E^k\to L(\E,\E)$, $1\leq i\leq k$, such that the connection map
$$K_{\a}:=\oplus_{i=1}^k\phi_\a^1 \o K \o T{\phi_\a^k}^{-1}=(\K_\a,...,\KKK_\a )$$ at $(u;y,\e_1,...,\e_k)\in
T_uT^kM$ is given by
\begin{eqnarray}\label{local form of K}
&&K|_{U_\a}(u;y,\e_1,...,\e_k)=\\
\nonumber&&\bigoplus_{i=1}^k\Big(x,\e_i+\m_\a(u)\e_{i-1}+
\mm_\a(u)\e_{i-2}+...+\stackrel{i}{M}_\a(u) y\Big).
\end{eqnarray}
\end{Lem}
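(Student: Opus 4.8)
The strategy is to unwind the definition of the connection map $K$ in the chart $(\pi_M^{k-1}(U_\a),\phi_\a^k)$ together with the corresponding chart $\oplus_{i=1}^k\phi_\a^1$ on $\oplus_{i=1}^kTM$, and then exploit the two structural constraints $\KKK\circ\mathbb{J}^a=\stackrel{k-a}{K}$ and $\KKK\circ\mathbb{J}^k={\pi_M^k}_*$ to pin down the shape of the local expression. First I would write out the local representative $K_\a := \oplus_{i=1}^k\phi_\a^1\circ K\circ T{\phi_\a^k}^{-1}$ as a map on $\phi_\a(U_\a)\times\E^k\times(\E\times\E^k)$: since $K$ is a vector bundle morphism covering the identity on $\oplus M$, each component $\stackrel{i}{K}_\a$ is, for fixed $u=(x,\xi_1,\dots,\xi_k)$, a (continuous linear) function of the fibre variables $(y,\e_1,\dots,\e_k)\in T_uT^kM\cong\E\times\E^k$ with values in $\E$; hence it has the form $\stackrel{i}{K}_\a(u;y,\e_1,\dots,\e_k)=A^i_0(u)y+\sum_{j=1}^kA^i_j(u)\e_j$ for some smooth maps $A^i_j:U_\a\times\E^k\to L(\E,\E)$. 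The goal is to show $A^i_j=0$ for $j>i$, $A^i_i=\mathrm{id}_\E$, $A^i_{i-1}=\stackrel{1}{M}_\a$, $A^i_{i-2}=\stackrel{2}{M}_\a$, …, $A^i_0=\stackrel{i}{M}_\a$, i.e. each coefficient depends only on the "gap" $i-j$, so we may rename $\stackrel{i-j}{M}_\a:=A^i_j$ and $\stackrel{i}{M}_\a:=A^i_0$.

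Next I would compute the local form of $\mathbb{J}$ and of its iterates $\mathbb{J}^a$. From the stated formula $\mathbb{J}_\a(u;y,\e_1,\dots,\e_k)=(u;0,y,\e_1,\dots,\e_{k-1})$ one gets by induction $\mathbb{J}^a_\a(u;y,\e_1,\dots,\e_k)=(u;\underbrace{0,\dots,0}_{a},y,\e_1,\dots,\e_{k-a})$ for $1\le a\le k$, where for $a=k$ every slot except possibly the first is $0$ and the surviving datum sits in a position that feeds into ${\pi_M^k}_*$. Substituting $\mathbb{J}^a_\a$ into the local expression for $\KKK_\a$ and using $\KKK\circ\mathbb{J}^a=\stackrel{k-a}{K}$ gives, for each $a$, the identity
\begin{equation*}
A^k_{a+1}(u)y+\sum_{j=1}^{k-a}A^k_{a+j}(u)\e_j \;=\; A^{k-a}_0(u)y+\sum_{j=1}^{k-a}A^{k-a}_j(u)\e_j
\end{equation*}
valid for all $(y,\e_1,\dots,\e_{k-a})$; matching coefficients yields $A^k_{a+j}=A^{k-a}_j$ for $1\le j\le k-a$ and $A^k_{a+1}=A^{k-a}_0$. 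These relations propagate the top-row coefficients $A^k_j$ down to all rows and simultaneously force $A^i_j=0$ whenever $j>i$ (because $A^k_m$ with $m>k$ is vacuous, so $A^{k-a}_j$ with $j>k-a$ must vanish). Finally the condition $\KKK\circ\mathbb{J}^k={\pi_M^k}_*$, computed in the chart, identifies $A^k_k$ with the identity on $\E$ (the local expression of the base projection's differential in these coordinates is the identity on the top slot), and then the propagated relations give $A^i_i=\mathrm{id}_\E$ for every $i$. Renaming $\stackrel{m}{M}_\a := A^k_{k-m}$ for $1\le m\le k$ (so that $\stackrel{m}{M}_\a$ is the coefficient attached to the variable lying $m$ slots below the diagonal, and $\stackrel{i}{M}_\a$ in the statement is the coefficient of $y$ in the $i$-th component) yields exactly formula \eqref{local form of K}, and smoothness of the $\stackrel{i}{M}_\a$ follows from smoothness of $K$ and of the chart maps, together with the fact that the coefficient maps of a smooth family of continuous linear maps depend smoothly on parameters (taking partial derivatives in the fibre directions, or pairing with a basis in the finite-dimensional / Hilbert case).

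The main obstacle I expect is purely bookkeeping: keeping the index shifts in $\mathbb{J}^a$ consistent with the conventions in $\phi_\a^k$ (note the $\tfrac{1}{k!}$ normalization in the definition of $\phi_\a^k$, which affects how the top component transforms and hence where the identity coefficient lands), and verifying that the local expression of ${\pi_M^k}_*$ in the charts $\phi_\a^k$ and $\phi_\a^1$ really is projection onto the leading slot rather than some rescaled version. Once the chart formula for $\mathbb{J}$, the iteration $\mathbb{J}^a$, and the linearity in fibres are in hand, the two defining constraints of Definition \ref{Def Connection map} do all the structural work, and no genuinely hard analysis is involved beyond the smooth-dependence remark; this is why I would present the coefficient-matching argument in detail but treat the smoothness claim briefly, citing \cite{Suri Osck}.
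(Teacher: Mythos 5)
The paper itself does not prove this lemma: it is stated ``according to \cite{Suri Osck}'' and the argument is deferred entirely to that reference, so there is no in-paper proof to compare against. Your plan is nonetheless the natural and correct derivation. Writing each component $\stackrel{i}{K}_\a$ of the local representative as $A^i_0(u)y+\sum_{j=1}^kA^i_j(u)\e_j$ is justified because a vector bundle morphism is fibrewise linear and its local expression depends smoothly on $u$; the identities $\KKK\o\mathbb{J}^a=\stackrel{k-a}{K}$ then propagate the coefficients of $\KKK_\a$ down to every row and annihilate the above-diagonal entries (since $\mathbb{J}^a$ discards $\e_{k-a+1},\dots,\e_k$, the component $\stackrel{k-a}{K}_\a$ cannot depend on them), while $\KKK\o\mathbb{J}^k={\pi^k_M}_*$ forces the diagonal coefficient to be $\mathrm{id}_\E$. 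Your worry about the $\frac{1}{k!}$ normalization in $\p_\a^k$ is harmless here, because ${\pi^k_M}_*$ only sees the base slot, which is charted by $\p_\a$ itself. One index slip to fix: after applying $\mathbb{J}^a_\a$ the original $y$ lands in the $\e_a$ slot, so the coefficient of $y$ in $\KKK_\a\o\mathbb{J}^a_\a$ is $A^k_a$, not $A^k_{a+1}$; as displayed, your identity would give $A^{k-a}_0=A^{k-a}_1$, contradicting the final formula in which $\stackrel{i}{M}_\a\neq\stackrel{i-1}{M}_\a$ in general. Your eventual renaming $\st{m}{M}_\a:=A^k_{k-m}$ together with $A^{k-a}_0=A^k_a$ is the consistent choice, so the slip is purely typographical and the coefficient-matching argument goes through as you describe.
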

On the common area of the charts $({\pi^k_M}^{-1}(U_\a),\p_\a^k)$ and $({\pi^k_N}^{-1}(U_\b),\p_\b^k)$,  the local components $\stackrel{i}{M}_\a$ and $\stackrel{i}{M}_\b$ are related via the transformation rule given by remark \ref{Rem transformation rule for Mi a and M i b}.
%
%
%
%
%
%
%

Let $M$ and $N$ be two smooth manifolds modeled on the Banach spaces $\E$ and $\E'$. Motivated by \cite{Vass}, \cite{Dod-Gal-Vass} and \cite{ali} we state the following two definitions.
\begin{Def}\label{Definition of T^kg}
Let $g:M\to N$ be a smooth map. For $k\in\N$ define the $k$'th order differential of $g$ by
\begin{eqnarray*}
T^kg:T^kM &\to & T^kN\\
{[\g,x]}_k &\mto& [g\o\g,g(x)]_k
\end{eqnarray*}
\end{Def}
To show that the above definition is well defined consider another  representative $[\delta,x]_k$ of the class $[\g,x]_k\in T^k_xM$.
Then, for any integer $1\leq i\leq k$, using (\ref{higher order chain rule}) we have
\begin{eqnarray*}
(g\o\g)^{(i)}(0)&=&\sum a_{(l_1,\dots,l_j)}^k d^jg(x)\big(\g^{(l_1)}(0),\dots,\g^{(l_j)}(0) \big)\\
&=&\sum a_{(l_1,\dots,l_j)}^k d^jg(x)\big(\delta^{(l_1)}(0),\dots,\delta^{(l_j)}(0) \big)\\
&=&(g\o\delta)^{(i)}(0)
\end{eqnarray*}
that is $T^kg$ is well defined.

\begin{Def}
Let $K_M$ and $K_N$ be two connection maps on $M$ and $N$ respectively and $g:M\to N$ be a smooth map.
$K_M$ and $K_N$ are called $g$-related if they commute with the differentials of $g$ in the following manner
\begin{equation}\label{g related connections equation}
K_N\o T T^kg=\oplus_{i=1}^k Tg\o K_M.
\end{equation}
\end{Def}
\begin{Rem}
If $k=1$ then, the above definition agrees with that of \cite{Vass}, \cite{Dod-Gal-Vass} and \cite{ali}.
\end{Rem}

From now on fix the atlas $\mathcal{B}=\{(V_\b,\s_\b)\}_{\b\in J}$ for  $N$
and construct the proposed   atlas discussed in theorem \ref{fibre bundle structure for TkM} for  $T^kN$ which we  denote  by $\mathcal{B}_k=\{({\pi_N^k}^{-1}(V_\b),\psi_\b^k)\}_{\b\in J}$. The model space of $N$ is the Banach space $\E'$.

For suitably chosen  charts $({\pi_M^k}^{-1}(U_\a),\p_\a^k)$ and $({\pi_N^k}^{-1}(V_\b),\s_\b^k)$, of $M$ and $N$ respectively, we have
\begin{eqnarray*}
&&(\oplus_{i=1}^k\s_\b^1)\o K_N\o T T^kg\o T{\phi_\a^k}^{-1}=\\
&&=(\oplus_{i=1}^k\s_\b^1)\o K_N \o {T\s_\b^k}^{-1}\o T\psi_\b^k\o T T^kg\o T{\phi_\a^k}^{-1}\\
&&:=K_{N,\b}\o T T^kg_{\b\a}
\end{eqnarray*}
and
\begin{equation*}
(\oplus_{i=1}^k\s_\b^1)\o (\oplus_{i=1}^k Tg)\o K_M\o  T{\phi_\a^k}^{-1}:=(\oplus_{i=1}^k Tg_{\b\a})\o K_{M,\a}
\end{equation*}
where $g_{\b\a}:=\s_\b\o g\o\p_\a^{-1}$ and $K_{M,\a}$ and $K_{N,\b}$ stand for the local representations of the connection
maps pointed out by lemma \ref{local form of a connection map}.

In order to reveal the local compatibility condition for  $g$-related   connections, for any $(x,\xi_1,\dots,\xi_k,y,\e_1,\dots,\e_k)\in U_\a\times \E^{2k+1}$, we define the following auxiliary
curve.
\begin{eqnarray*}
\bar{c}:(-\eps,\eps)^2&\to &\p_\a(U_\a)\subseteq\E\\
(t,s)&\mto& x+sy+\sum_{i=1}^kt^i(\xi_i+s\e_i)
\end{eqnarray*}
Now, evaluating $(\oplus_{i=1}^k Tg_{\b\a})\o K_{M,\a}$ at $(u,y,\e_1,\dots,\e_k)$ yields
\begin{eqnarray*}
&&(\oplus_{i=1}^k Tg_{\b\a})\o K_{M,\a}(u,y,\e_1,\dots,\e_k)=\\
&&\bigoplus_{i=1}^k\Big(g_{\b\a}(x),dg_{\b\a}(x)[\e_i+\m_\a(u)\e_{i-1}+
\mm_\a(u)\e_{i-2}+...+\stackrel{i}{M}_\a(u) y]\Big).
\end{eqnarray*}
On the other hand
\begin{eqnarray*}
&&K_{N,\b}\o T T^kg_{\b\a}(u,y,\e_1,\dots,\e_k)\\
%
%
&&=\bigoplus_{i=1}^k\Big(g_{\b\a}(x), \bar{\e}_i +\st{1}{N}_\b(\bar{u})
\bar{\e}_{i-1}+\dots+\st{i}{N}_\b(\bar{u})\bar{y}         \Big)
\end{eqnarray*}
where $\bar\g(t):=\bar{c}(t,0)$, $\bar{u}=((g_{\a\b}\o\bar\g)(0),\dots,\frac{1}{k!}(g_{\a\b}\o\bar\g)^{(k)}(0))$, $\bar{y}=\frac{\partial }{\partial s}(g_{\b\a}\o\bar{c})(0,0)$,
$\bar{\e}_i=\frac{1}{i!}\frac{\partial^{i+1}}{\partial s\partial t^i}(g_{\b\a}\o\bar{c})(0,0)$ and $\st{i}{N}_\b$ are the local components of $K_N$
for $1\leq i\leq k$.

With the notation as above, we have the following \textbf{important} compatibility condition which locally declares  $g$-related connection maps
\begin{eqnarray}\label{g related local compatiblity condition}
&&dg_{\b\a}(x)[\e_i+\m_\a(u)\e_{i-1}+
\mm_\a(u)\e_{i-2}+...+\stackrel{i}{M}_\a(u) y]\\
\nonumber&&=\bar{\e}_i +\st{1}{N}_\b(\bar{u})
\bar{\e}_{i-1}+\dots+\st{i}{N}_\b(\bar{u})\bar{y}; ~~~~1\leq i\leq k.
\end{eqnarray}

\begin{Rem}\label{Rem compatibility condition for connections when k=1}
If  $k=1$, then the last equation coincides with the local compatibility condition for  $g$-related connections $K_M$ and $K_N$ on $M$ and $N$ as it is stated  in \cite{Vass} p. 299.
\end{Rem}
\begin{Rem}\label{Rem transformation rule for Mi a and M i b}
Whenever $M=N$, $K_M=K_N:=K$ and  $g=id_M$, the equation (\ref{g related local compatiblity condition}) reduces to the compatibility condition which locally the connection maps  on common charts must satisfy \cite{Suri Osck}.
\end{Rem}

%
%
In what follows, we determine a canonical connection map on $T^kM$ depending only on a given linear connection (Riemannian metric) on the base manifold $M$.
Keeping the formalisms of  \cite{ali-iejgeo, Vass, Vilms} we state the following proposition according to \cite{Suri Osck}.
\begin{Pro}\label{lifted connection to osckM}
Let $\nabla$ be  a linear connection on $M$ with the local
components (Christoffel symbols) $\{\G\}_{\a\in I}$. There exists an induced connection
map on ${T}^kM$ with the following local components.
\begin{eqnarray*}
&&\m_\a(x,\xi_1)y=\Gamma_\a(x,\xi_1)y\\
&&\mm_\a(x,\xi_1,\xi_2)y=\frac{1}{2}\Big(\sum_{i=1}^2\partial_i\m_\a(x,\xi_1)(y,i\xi_i)+\m_\a(x,\xi_1)[\m_\a(x,\xi_1)y]\Big),\\
&&\vdots\\
&&\st{k}{M}_\a(x,\xi_1,...,\xi_k)y=\frac{1}{k}\Big(\sum_{i=1}^{k}\partial_i\st{k-1}{M}_\a(x,\xi_1,...,\xi_{k-1})(y,i\xi_i)\\
&&\hspace{3.7cm}+\m_\a(x,\xi_1)[\st{k-1}{M}_\a(x,\xi_1,...,\xi_{k-1})y]\Big).
\end{eqnarray*}
\end{Pro}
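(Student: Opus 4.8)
The plan is to read the recursion in the statement as a \emph{definition} of the candidate local components and then verify that the resulting object meets Definition~\ref{Def Connection map}. On each chart $({\pi_M^k}^{-1}(U_\a),\p_\a^k)$ one sets $\m_\a:=\G_\a$ and defines $\st{2}{M}_\a,\dots,\st{k}{M}_\a$ successively by the displayed formulas; since $\G_\a$ is smooth and each step uses only partial differentiation, the bilinear composition on $L(\E,\E)$ and finite sums, an immediate induction shows that every $\st{i}{M}_\a$ is a smooth map $U_\a\dar\E^k\to L(\E,\E)$. One then declares $K$ on ${\pi_M^k}^{-1}(U_\a)$ by formula~\eqref{local form of K}.

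First I would dispose of the structural requirements, which in fact hold for \emph{any} choice of the $\st{i}{M}_\a$. For fixed $u$ the right-hand side of~\eqref{local form of K} is linear in $(y,\e_1,\dots,\e_k)$ and depends smoothly on $u$, so $K|_{{\pi_M^k}^{-1}(U_\a)}$ is a vector bundle morphism covering $u\mapsto(x,\dots,x)$. The identities $\st{k}{K}\o\mathbb{J}^a=\st{k-a}{K}$ for $1\le a\le k-1$ and $\st{k}{K}\o\mathbb{J}^k={\pi_M^k}_*$ are then purely formal: $\mathbb{J}_\a$ is the shift $(u;y,\e_1,\dots,\e_k)\mapsto(u;0,y,\e_1,\dots,\e_{k-1})$, and since the $i$'th component of~\eqref{local form of K} is the lower-triangular expression $\e_i+\m_\a(u)\e_{i-1}+\dots+\st{i}{M}_\a(u)y$, precomposing with $\mathbb{J}_\a^a$ just relabels the slots and kills the terms attached to the erased entries, leaving exactly $\st{k-a}{K}$ when $a\le k-1$ and $y$ (i.e.\ ${\pi_M^k}_*$ under the standard identification) when $a=k$.

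The substance of the proof is the gluing: on an overlap $U_\a\cap U_\b$ the locally defined $K$'s must agree, equivalently the $\st{i}{M}_\a$ must satisfy the transformation rule of Remark~\ref{Rem transformation rule for Mi a and M i b}, which is exactly the compatibility condition~\eqref{g related local compatiblity condition} specialised to $M=N$, $K_M=K_N$ and $g=\mathrm{id}_M$, with $g_{\b\a}$ now the transition map $\s_\b\o\p_\a^{-1}$. I would argue by induction on $i$. For $i=1$ the identity to be checked is precisely the classical transformation law of Christoffel symbols, which holds because $\nabla$ is a linear connection by hypothesis. For the inductive step, assuming the rule for $\m_\a,\dots,\st{i-1}{M}_\a$, I would differentiate the order-$(i-1)$ compatibility relation once more in $t$ along the auxiliary curve $\bar c$ introduced just before~\eqref{g related local compatiblity condition}, expand the iterated derivatives of $g_{\b\a}\o\bar c$ with the higher order chain rule~\eqref{higher order chain rule}, renormalise, and collect terms. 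The recursion defining $\st{i}{M}_\a$ — the sum $\frac1i\sum_{j=1}^{i}\partial_j\st{i-1}{M}_\a(\,\cdot\,)(y,j\xi_j)$, which absorbs the explicit coordinate dependence, together with the composition term $\frac1i\,\m_\a[\st{i-1}{M}_\a\,y]$ — is exactly the combination forced by this differentiation, the factors $j\xi_j$ coming from differentiating the monomials $t^j\xi_j$ inside $\bar c$. This bookkeeping with the higher order chain rule is the one genuinely laborious step; there is no conceptual obstacle, only the combinatorics of matching the two sides.

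Putting the pieces together, $\{K|_{{\pi_M^k}^{-1}(U_\a)}\}_{\a\in I}$ assembles into a global vector bundle morphism $K:TT^kM\to\oplus_{i=1}^kTM$ obeying the axioms of Definition~\ref{Def Connection map}, hence a connection map on $T^kM$ with the asserted local components. Alternatively, one can bypass the gluing by building $K$ intrinsically from $\nabla$ — differentiating, by means of the covariant derivative, a variation of curves that represents a tangent vector of $T^kM$ — and then recovering the displayed formulas by running the same chain-rule computation in a single chart, as in~\cite{Suri Osck}.
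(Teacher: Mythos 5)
The paper does not actually prove Proposition~\ref{lifted connection to osckM}: it is stated ``according to \cite{Suri Osck}'' and the verification is deferred to that earlier work, so there is no in-paper proof to compare against line by line. Judged on its own terms, your strategy is sound and is consistent with the techniques the paper does display. Your observation that the two axioms of Definition~\ref{Def Connection map} are purely formal consequences of the lower-triangular shape of \eqref{local form of K} is correct: precomposing with $\mathbb{J}_\a$ shifts $(y,\e_1,\dots,\e_k)$ to $(0,y,\dots,\e_{k-1})$, which turns the $k$'th row into the $(k-1)$'th and, after $k$ iterations, leaves only the $y$-slot, giving $\KKK\o\mathbb{J}^k={\pi_M^k}_*$ for \emph{any} choice of the $\st{i}{M}_\a$. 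You also correctly isolate the real content, namely that the recursively defined components satisfy the overlap condition of Remark~\ref{Rem transformation rule for Mi a and M i b}, i.e.\ \eqref{g related local compatiblity condition} with $M=N$, $g=\mathrm{id}_M$ and $g_{\b\a}$ the transition map. The one thing you leave implicit --- the inductive chain-rule bookkeeping showing that differentiating the order-$(i-1)$ compatibility relation along the auxiliary curve reproduces exactly the combination $\frac1i\bigl(\sum_j\partial_j\st{i-1}{M}_\a(\cdot)(y,j\xi_j)+\m_\a[\st{i-1}{M}_\a\,y]\bigr)$ --- is precisely the computation the paper carries out in full, in the more general setting of $g$-related connections, in the proof of Proposition~\ref{theorem lifts of related connections remain related}; specialising that argument to the identity map closes your inductive step, so the gap is one of executed detail rather than of idea. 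Your proposed intrinsic alternative (building $K$ directly from $\nabla$ by covariant differentiation of variations of curves and then reading off the local components) is also viable and is closer in spirit to how \cite{Suri Osck} motivates the formulas.
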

\subsection{Lifting of related linear connections to higher order tangent bundles}

In this section we show that lift of $g$-related linear connections remain $g$-related. More precisely let $g:M\to N$ be a smooth map between differentiable  manifolds $M$ and $N$ and  $\nabla_M$ and $\nabla_N$ be two  linear connections on $M$ and $N$ respectively. Moreover suppose that  $K_M$ and $K_N$ be  the lifted connection maps (as in proposition  \ref{lifted connection to osckM}) generated by $\nabla_M$ and $\nabla_N$ on $T^kM$ and $T^kN$ respectively.
\begin{Pro}\label{theorem lifts of related connections remain related}
If  $\nabla_M$ and $\nabla_N$ are  $g$-related,   then $K_M$ and $K_N$ are $g$-related connection maps too.
\end{Pro}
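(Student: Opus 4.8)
The plan is to verify the $g$-relatedness of $K_M$ and $K_N$ directly through the local compatibility condition \eqref{g related local compatiblity condition}, using the explicit recursive formulas for the lifted components $\stackrel{i}{M}_\a$ given in Proposition \ref{lifted connection to osckM} together with the known compatibility of the base connections $\nabla_M$ and $\nabla_N$. Recall that $\nabla_M$ and $\nabla_N$ being $g$-related means that at each point their Christoffel symbols satisfy the classical relation
\begin{equation*}
dg_{\b\a}(x)\big[\G_{M,\a}(x)(\xi,\zeta)\big] + d^2g_{\b\a}(x)(\xi,\zeta) = \G_{N,\b}(g_{\b\a}(x))\big(dg_{\b\a}(x)\xi, dg_{\b\a}(x)\zeta\big),
\end{equation*}
which is exactly the $k=1$ case of \eqref{g related local compatiblity condition} (see Remark \ref{Rem compatibility condition for connections when k=1}). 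The aim is to show that the same compatibility propagates to all orders $1\leq i\leq k$.

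First I would set up the induction on $i$. For $i=1$ the statement $dg_{\b\a}(x)[\e_1 + \m_\a(u)y] = \bar\e_1 + \st{1}{N}_\b(\bar u)\bar y$ reduces, after unwinding the definitions of $\bar y$ and $\bar\e_1$ via the auxiliary curve $\bar c$, precisely to the base-level relation just recalled, since $\m_\a = \G_{M,\a}$ and $\st{1}{N}_\b = \G_{N,\b}$. For the inductive step I would assume the compatibility \eqref{g related local compatiblity condition} holds for all orders $\leq i-1$, and then feed the recursive formula
$$\st{i}{M}_\a(x,\xi_1,\dots,\xi_i)y = \frac{1}{i}\Big(\sum_{j=1}^i \partial_j \st{i-1}{M}_\a(x,\xi_1,\dots,\xi_{i-1})(y,j\xi_j) + \m_\a(x,\xi_1)\big[\st{i-1}{M}_\a(x,\xi_1,\dots,\xi_{i-1})y\big]\Big)$$
into the left-hand side of \eqref{g related local compatiblity condition}. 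The strategy is to differentiate the order-$(i-1)$ compatibility identity in the appropriate directions, using the chain rule to move $dg_{\b\a}$ through the derivatives $\partial_j$, and to recognize that the extra $d^2g_{\b\a}$ terms produced assemble, together with the $\m_\a[\st{i-1}{M}_\a\,\cdot\,]$ term, into exactly the shifted expression $\bar\e_i + \st{1}{N}_\b(\bar u)\bar\e_{i-1} + \dots + \st{i}{N}_\b(\bar u)\bar y$ on the right. Equivalently, one can observe that $T^kg$ intertwines the endomorphisms $\mathbb{J}$ on $T^kM$ and on $T^kN$ (a quick check from Definition \ref{Def Connection map} and the local form of $\mathbb{J}$), which, combined with $K_N \circ TT^kg$ being determined by its value after composing with powers of $\mathbb{J}$, lets the whole identity be reduced to its $i=1$ shadow.

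The main obstacle I anticipate is bookkeeping rather than conceptual: matching the combinatorial coefficients produced by repeatedly applying the higher-order chain rule \eqref{higher order chain rule} to $g_{\b\a}\circ\bar c$ — that is, showing that the derivatives $\bar y = \tfrac{\partial}{\partial s}(g_{\b\a}\circ\bar c)(0,0)$ and $\bar\e_i = \tfrac{1}{i!}\tfrac{\partial^{i+1}}{\partial s\,\partial t^i}(g_{\b\a}\circ\bar c)(0,0)$, when expanded in terms of $dg_{\b\a}, d^2g_{\b\a},\dots$ and the data $(y,\e_1,\dots,\e_i)$ and $(\xi_1,\dots,\xi_i)$, combine with the Christoffel terms on the $N$ side so that the coefficients $a^k_{(l_1,\dots,l_j)}$ cancel correctly against the $\tfrac{1}{i}$ factors in the recursion for $\st{i}{M}_\a$. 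One clean way to organize this is to note that both sides of \eqref{g related local compatiblity condition}, for the lifted connections, are obtained by the \emph{same} universal recursive procedure applied to the respective base Christoffel symbols, and that this procedure is natural under $dg_{\b\a}$ precisely because of the $i=1$ compatibility; hence an appeal to the uniqueness of the solution of the recursion (for fixed initial data) finishes the argument without an explicit coefficient chase. I would present the $i=1$ case in full, indicate the differentiation step for general $i$, and invoke this naturality/uniqueness observation to close the induction.
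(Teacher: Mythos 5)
Your overall skeleton --- induction on the order $i$, with the base case being the classical compatibility of the Christoffel symbols and the inductive step obtained by feeding the recursion of Proposition \ref{lifted connection to osckM} into (\ref{g related local compatiblity condition}) --- is exactly the paper's strategy, and your treatment of the base case is correct. The gap lies in the two shortcuts you propose for closing the inductive step. First, the $\mathbb{J}$-intertwining reduction does not work: the relations $\KKK\o\mathbb{J}^a=\stackrel{k-a}{K}$ only determine the top component $\KKK$ on vectors of the form $\mathbb{J}^a(u;y,\e_1,\dots,\e_k)$, i.e.\ on the subspace where the base direction vanishes; the genuinely new datum at order $k$, namely the horizontal term $\st{k}{M}_\a(u)y$, is precisely the part of $\KKK$ that is \emph{not} seen by composing with powers of $\mathbb{J}$ (indeed $\mathbb{J}^k$ moves $y$ into the last fibre slot and $\KKK\o\mathbb{J}^k={\pi^k_M}_*$ carries no information about $\st{k}{M}_\a$). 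So the identity cannot be ``reduced to its $i=1$ shadow'' this way; the paper instead uses the induction hypothesis to dispose of the value at $(u;0,\e_1,\dots,\e_k)$ and must establish the value at $(u;y,0,\dots,0)$ --- the $\st{k}{M}_\a(u)y$ term --- by direct computation, adding the two contributions at the end by fibrewise linearity.

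Second, the ``naturality/uniqueness of the recursion'' appeal is not an argument but a restatement of the claim. The recursion for $\st{k}{M}_\a$ involves the partial derivatives $\partial_j\st{k-1}{M}_\a$, and pushing $dg_{\b\a}(x)$ through $\partial_1$ produces an extra term $-d^2g_{\b\a}(x)\big(\xi_1,\st{k-1}{M}_\a(u_{k-1})y\big)$ that must be shown to cancel against the contribution of $\m_\a(x,\xi_1)[\st{k-1}{M}_\a(u_{k-1})y]$ once the $i=1$ compatibility is applied to the latter; the remaining derivative terms must then be reassembled, via the combinatorial identities of Lemma \ref{Lem 2} applied to the auxiliary curves $\bar c_i(t,s,h)$, into $\sum_j\partial_j\st{i}{N}_\b(\bar u_i)(\cdot,j\bar\xi_j)$ plus the correct multiples of $\st{i}{N}_\b(\bar u_i)\bar\e_{k-i}$, so that the coefficients sum to $k$. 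This coefficient chase is the actual content of the inductive step in the paper (it occupies essentially the whole proof), and nothing in your proposal replaces it: you do not say what recursion the right-hand side of (\ref{g related local compatiblity condition}) satisfies in the barred variables, nor why it is the ``same'' one, and that is exactly the statement to be proved. To repair the proposal you would need to carry out the differentiation explicitly, or at least isolate and prove a lemma of the type of Lemma \ref{Lem 2} that justifies the claimed naturality.
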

\begin{proof}
We shall prove that, for  any $\a\in I$, $\b\in J$ with $g(U_\a)\subseteq V_\b$ ,  the local components $\{\st{i}{M}_\a\}_{i=1,\dots,k}$ and   $\{\st{i}{N}_\b\}_{i=1,\dots,k}$
from proposition  \ref{lifted connection to osckM} satisfy  the condition (\ref{g related local compatiblity condition}).
The proof is by induction on $i$.

For the base step of induction consider  the Christoffel symbols $\{\G_\a^M\}_{\a\in I}$ and $\{\G_\b^N\}_{\b\in J}$ of the $g$ related connections $\nabla_M$ and $\nabla_N$. The compatibility condition for $\G_\a^M$ and $\G_\b^N$ is given by
\begin{eqnarray*}
dg_{\b\a}(x)[\eta_1+\G_\a^M(x,\xi_1)y]&=&dg_{\b\a}(x)(\e_1)+d^2g_{\b\a}(x)(\xi_1,y)    \\
&&+\G_\b^N\big(g_{\b\a}(x),dg_{\b\a}(x)(\xi_1)\big)dg_{\b\a}(x)(y)
\end{eqnarray*}
(for more details see  \cite{Dod-Gal-Vass, ali, Vass}). Moreover we  have $\st{1}{M}_\a=\G_\a^M$ and $\st{1}{N}_\b=\G_\b^N$. These last three equalities show that $\st{1}{M}_\a$ and $\st{1}{N}_\b$ satisfy  (\ref{g related local compatiblity condition}).

By induction, we assume that for $i=1,\dots k-1$, $\st{i}{M}_\a$ and $\st{i}{N}_\b$; the rule (\ref{g related local compatiblity condition}) is verified i.e.
\begin{eqnarray}
&&dg_{\b\a}(x)[\e_i+\m_\a(u_1)\e_{i-1}+
\mm_\a(u_2)\e_{i-2}+...+\stackrel{i}{M}_\a(u_i) y]\\
\nonumber&&=\bar{\e}_i +\st{1}{N}_\b(\bar{u}_1)
\bar{\e}_{i-1}+\dots+\st{i}{N}_\b(\bar{u}_i)\bar{y}
\end{eqnarray}
where $u_j:=(x,\xi_1,\dots,\xi_j)$, $\bar{u}_j=\big((g_{\b\a}\o\bar\g)(0),\dots,\frac{1}{j!}(g_{\b\a}\o\bar\g)^{(j)}(0)\big)$,
$\bar{y}=\frac{\partial}{\partial s}(g_{\b\a}\o \bar{c})(0,0)$, $\bar{\e}_j=\frac{1}{j!}\frac{\partial^{j+1}}{\partial s\partial t^j}(g_{\b\a}\o \bar{c})(0,0)$, for $1\leq j\leq i$, and $\bar{c}$ and $\bar \g$ are  as in   (\ref{g related local compatiblity condition}).
Then from the definition of $\st{k}{M}_\a$ at $(u_k;y,0,\dots0)$  we get
\begin{eqnarray*}
kdg_{\b\a}(x)\st{k}{M}_\a(u_k)y&=&dg_{\b\a}(x)\big\{
\sum_{i=1}^k\partial_i\st{k-1}{M}_\a(u_{k-1})(y,i\xi_i)\\
&&+\st{1}{M}_\a(x,\xi_1)\st{k-1}{M}_\a(u_{k-1})y\big\}
\end{eqnarray*}
It perhaps worth remarking that
\begin{eqnarray*}
&&dg_{\b\a}(x)\{\partial_1\st{k-1}{M}_\a(u_{k-1})(y,\xi_1)\}\\
&=&dg_{\b\a}(x)\frac{d}{ds}   \big(   \underbrace{\st{k-1}{M}_\a(x+s\xi_1,\xi_1,\dots,\xi_{k-1})y}_{\star}   \big)\\
&=&\textrm{lim}_{s\rightarrow 0}      \Big(  dg_{\b\a}(x)\star  - dg_{\b\a}(x)\st{k-1}{M}_\a(u_{k-1})y
\pm  dg_{\b\a}(x+s\xi_1)\star   \Big)/{s}\\
&=& -\textrm{lim}_{s\rightarrow 0}   \Big( dg_{\b\a}(x+s\xi_1)\star- dg_{\b\a}(x)\star         \Big)/{s}\\
&&+ \textrm{lim}_{s\rightarrow 0} \Big(   \ dg_{\b\a}(x+s\xi_1)\star   - dg_{\b\a}(x)\st{k-1}{M}_\a(u_{k-1})y\Big)/{s}.\\
&=&-d^2g_{\b\a}(x) \big(  \xi_1,\st{k-1}{M}_\a(u_{k-1})y \big) \\
&&+ \textrm{lim}_{s\rightarrow 0} \Big(   \ dg_{\b\a}(x+s\xi_1)\star   - dg_{\b\a}(x)\st{k-1}{M}_\a(u_{k-1})y\Big)/{s}.\\
\end{eqnarray*}
Now, by the induction hypothesis   we have
\begin{eqnarray*}
&&dg_{\b\a}(x)\{\partial_1\st{k-1}{M}_\a(u_{k-1})(y,\xi_1)\}
=-d^2g_{\b\a}(x)(\xi_1,\st{k-1}{M}_\a(u_{k-1})y\big)\\
&&+\frac{\partial}{\partial h}\Big\{\frac{\partial ^k}{(k-1)!\partial s\partial t^{k-1}}(g_{\b\a}\o \bar{c}_1)(t,s,h)\\
&&+\st{1}{N}_\b[ (g_{\b\a}\o \bar{\g}_1)(t,h),\frac{\partial}{\partial t}(g_{\b\a}\o \bar{\g}_1)(t,h)]\frac{\partial ^{k-1}}{(k-2)!\partial s\partial t^{k-2}}(g_{\b\a}\o \bar{c}_1)(t,s,h)\\
&&+\dots+ \st{k-1}{N}_\b[ (g_{\b\a}\o \bar{\g}_1)(t,h),\frac{\partial}{\partial t}(g_{\b\a}\o \bar{\g}_1)(t,h),\dots\\
&&,\frac{\partial^{k-1}}{(k-1)!\partial t^{k-1}}(g_{\b\a}\o \bar{\g}_1)(t,h)]\frac{\partial }{\partial s}(g_{\b\a}\o \bar{c}_1)(t,s,h)\Big\}|_{t=s=h=0}
\end{eqnarray*}
and for $2\leq i\leq k$
\begin{eqnarray*}
&&dg_{\b\a}(x)\{\partial_i\st{k-1}{M}_\a(u_{k-1})(y,i\xi_i)\}
=\frac{\partial}{\partial h}\Big\{\frac{\partial ^k}{(k-1)!\partial s\partial t^{k-1}}(g_{\b\a}\o \bar{c}_i)(t,s,h)\\
&&+\st{1}{N}_\b[ (g_{\b\a}\o \bar{\g}_i)(t,h),\frac{\partial}{\partial t}(g_{\b\a}\o \bar{\g}_i)(t,h)]\frac{\partial ^{k-1}}{(k-2)!\partial s\partial t^{k-2}}(g_{\b\a}\o \bar{c}_i)(t,s,h)\\
&&+\dots+ \st{k-1}{N}_\b[ (g_{\b\a}\o \bar{\g}_i)(t,h),\frac{\partial}{\partial t}(g_{\b\a}\o \bar{\g}_i)(t,h),\dots\\
&&,\frac{\partial^{k-1}}{(k-1)!\partial t^{k-1}}(g_{\b\a}\o \bar{\g}_i)(t,h)]\frac{\partial }{\partial s}(g_{\b\a}\o \bar{c}_i)(t,s,h)\Big\}|_{t=s=h=0}
\end{eqnarray*}
where $\bar{c}_1(t,s,h):=x+h\xi_i+s y+t\xi_1+\dots +t^k\xi_k,$ $\bar\g_1(t,h)=\bar{c}_1(t,0,h),\dots$,
$$\bar{c}_i(t,s,h):=x+sy +t\xi_1+\dots +t^{i-2}\xi_{i-2}+t^{i-1}(\xi_{i-1}+hi\xi_i)+t^i\xi_i+\dots +t^k\xi_k$$
and $\bar\g_i(t,h)=\bar{c}_i(t,0,h)$.
As a result
\begin{eqnarray*}
&&kdg_{\b\a}(x)\st{k}{M}_\a(u_k)y\\
&=&-d^2g_{\b\a}(x)(\xi_1,\st{k-1}{M}_\a(u_{k-1})y\big)+\sum_{i=1}^k\frac{\partial}{\partial h}\Big\{\frac{\partial ^k}{(k-1)!\partial s\partial t^{k-1}}(g_{\b\a}\o \bar{c}_i)(t,s,h)\\
&&+\st{1}{N}_\b[ (g_{\b\a}\o \bar{\g}_i)(t,h),\frac{\partial}{\partial t}(g_{\b\a}\o \bar{\g}_i)(t,h)]\frac{\partial ^{k-1}}{(k-2)!\partial s\partial t^{k-2}}(g_{\b\a}\o \bar{c}_i)(t,s,h)\\
&&+\dots+ \st{k-1}{N}_\b[ (g_{\b\a}\o \bar{\g}_i)(t,h),\frac{\partial}{\partial t}(g_{\b\a}\o \bar{\g}_i)(t,h),\dots\\
&&,\frac{\partial^{k-1}}{(k-1)!\partial t^{k-1}}(g_{\b\a}\o \bar{\g}_i)(t,h)]\frac{\partial }{\partial s}(g_{\b\a}\o \bar{c}_i)(t,s,h)\Big\}|_{t=s=h=0}\\
&& +\st{1}{N}_\b(\bar{u}_1)[dg_{\b\a}(x)\st{k-1}{M}_\a({u}_{k-1}){y}]+ d^2g_{\b\a}(x)(\xi_1,\st{k-1}{M}_\a(u_{k-1})y\big).
\end{eqnarray*}
To simplify the above terms, again we use the induction hypothesis for the last line and we get
\begin{eqnarray*}
&&kdg_{\b\a}(x)\st{k}{M}_\a(u_k)y\\
&&= \frac{\partial ^{k+1}}{(k-1)!\partial h\partial s\partial t^{k-1}}\sum_{i=1}^{k}(g_{\b\a}\o \bar{c}_i)(t,s,h)|_{t=s=h=0}\\
&&+ \sum_{j=1}^2\partial _j\st{1}{N}_\b(\bar{u}_1)(\bar{\e}_{k-2},   \sum_{i=1}^k\{\frac{\partial^{j}}{(j-1)!\partial h\partial t^{j-1}}(g_{\b\a}\o \bar{\g}_i)(t,h)\})+\dots\\
&&+\sum_{j=1}^k\partial_j\st{k-1}{N}_\b(\bar{u}_{k-1})(\bar{y},   \sum_{i=1}^k\{\frac{\partial^{j}}{(j-1)!\partial h\partial t^{j-1}}(g_{\b\a}\o \bar{\g}_i)(t,h)\})|_{t=h=0}\\
\end{eqnarray*}
\begin{eqnarray*}
&& +\st{1}{N}_\b(\bar{u}_1)(\frac{\partial^k}{(k-2)!\partial h\partial s\partial t^{k-2}}\sum_{i=1}^k(g_{\b\a}\o \bar{c}_i)(t,s,h)|_{t=s=h=0})
+\dots\\
&& +\st{k-1}{N}_\b(\bar{u}_{k-1})(\frac{\partial^2}{\partial h\partial s}\sum_{i=1}^k(g_{\b\a}\o \bar{c}_i)(t,s,h)|_{t=s=h=0})
\\
%
%
&&+\st{1}{N}_\b(\bar{u}_1)[\bar{\e}_{k-1}+\st{1}{N}_\b(\bar{u}_1)\bar{\e}_{k-2}+\dots+\st{k-1}{N}_\b(\bar{u}_{k-1})\bar{y}]
\end{eqnarray*}
Now, using lemma \ref{Lem 2}, it follows that $kdg_{\b\a}(x)\st{k}{M}_\a(u_k)y$ is equal to
\begin{eqnarray*}
&&k\bar{\e}_k+
\sum_{j=1}^2\partial _j\st{1}{N}_\b(\bar{u}_1)(\bar{\e}_{k-2},j\bar{\xi}_j) +\dots+\sum_{j=1}^k\partial_j\st{k-1}{N}_\b(\bar{u}_{k-1})(\bar{y},j\bar{\xi}_j)\\
&&+(k-1)\st{1}{N}_\b(\bar{u}_1)\bar{\e}_{k-1} + \dots + \st{k-1}{N}_\b(\bar{u}_{k-1})\bar{\e}_{1}\\
&& +\st{1}{N}_\b(\bar{u}_1)[\bar{\e}_{k-1}+\st{1}{N}_\b(\bar{u}_1)\bar{\e}_{k-2}+\dots+\st{k-1}{N}_\b(\bar{u}_{k-1})\bar{y}]
\end{eqnarray*}
\begin{eqnarray*}
&=&k\bar{\e}_k+2\st{2}{N}_\b(\bar{u}_{2})\bar{\e}_{k-2}+\dots+k\st{k}{N}_\b(\bar{u}_{k})\bar{y}+(k-1)\st{1}{N}_\b(\bar{u}_1)\bar{\e}_{k-1} \\
&&+(k-2)\st{2}{N}_\b(\bar{u}_2)\bar{\e}_{k-2} +\dots + \st{k-1}{N}_\b(\bar{u}_{k-1})\bar{\e}_{1} +\st{1}{N}_\b(\bar{u}_1)(\bar{\e}_{k-1})\\
&=&k\bar{\e}_k+ (1 +(k-1))\st{1}{N}_\b(\bar{u}_1)\bar{\e}_{k-1} +\dots +((k-1)+1)\st{k-1}{N}_\b(\bar{u}_{k-1})\bar{\e}_{1} \\
&&+k\st{k}{N}_\b(\bar{u}_{k})\bar{y}\\
&=&k   \Big(    \bar{\e}_k+\st{1}{N}_\b(\bar{u}_1)\bar{\e}_{k-1} +\dots+\st{k}{N}_\b(\bar{u}_{k})\bar{y}  \Big)
\end{eqnarray*}
where $\bar{\xi}_j=Proj_{j+1}(\bar{u}_{k})$ and $Proj_{j+1}$, $0\leq j\leq k-1$, is the projection map to the $(j+1)$'th factor. As a consequence we obtain
\begin{eqnarray*}
dg_{\b\a}(x)\st{k}{M}_\a(x,\xi_1,\dots,\xi_k)y=\bar{\e}_k+\st{1}{N}_\b(\bar{u}_1)\bar{\e}_{k-1} +\dots+\st{k}{N}_\b(\bar{u}_{k})\bar{y}.
\end{eqnarray*}

However, as we are about to see, the desired  compatibility condition  is finally handled. More precisely we have shown that for the  vector bundle morphisms $K_M$ and $K_N$ locally
\begin{eqnarray*}
\oplus_{i=1}^k Tg_{\b\a}(x)K_{M,\a}(u;y,0,\dots,0)=K_{N,\b}\o TT^kg_{\b\a}(u;y,0,\dots,0).
\end{eqnarray*}
Moreover by the induction hypothesis we have
\begin{eqnarray*}
\oplus_{i=1}^k Tg_{\b\a}(x)K_{M,\a}(u;0,\e_1,\e_2,\dots,\e_k)=K_{N,\b}\o TT^kg_{\b\a}(u;0,\e_1,\e_2,\dots,\e_k).
\end{eqnarray*}
Since $K_M$ and $K_N$ are vector bundle morphisms (and linear on fibres) we can add the last two equalities. As a consequence we have
\begin{eqnarray*}
\oplus_{i=1}^k Tg_{\b\a}(x)K_{M,\a}(u;y,\e_1,\e_2,\dots,\e_k)=K_{N,\b}\o TT^kg_{\b\a}(u;y,\e_1,\e_2,\dots,\e_k).
\end{eqnarray*}
This last means that  $K_M$ and $K_N$ are $g$-related connection maps on the $k$'th order tangent bundles of $M$ and $N$ respectively, constructed only with the help of the connections $\nabla_M$ and $\nabla_N$.
\end{proof}
%


%
%
\subsection{$T^kM$ as a vector bundle}
For $k\geq 2$, the bundle structure defined in theorem \ref{fibre
bundle structure for TkM} is quite far from being a vector bundle due to
the complicated nonlinear transition functions. However, according to \cite{Suri Osck} we have the following main theorem.
\begin{The}\label{osc k admits a vb}
Let $\nabla$ be a linear connection on $M$ and $K$ the induced
connection map introduced in proposition  \ref{lifted connection to
osckM}. The following trivializations  define a vector bundle
structure on $\pi^k_M:{T}^kM\to M$ with the structure group
$GL(\E^k)$.
\begin{eqnarray}\label{trivializations for TkM as a vb}
\P_\a^k:{\pi^k_M}^{-1}(U_\a)&\to& \p_\a(U_\a)\times \E^k\\
\nonumber {[\g,x]}_k&\mto&(\g_\a(0),\g_\a^{(1)}(0),z^2([\g,x]_k),\dots,z^k([\g,x]_k))
\end{eqnarray}
where $\g_\a=\p_\a\o\g$ and
\begin{eqnarray*}
z^2_\a([\g,x]_k)&=&\frac{1}{2}\Big\{\frac{\g_\a^{(2)}(0)}{1!}+\m_\a[\g_\a(0),\g_\a^{(1)}(0)]\g_\a^{(1)}(0)\Big\},\\
\vdots\\
%
%
%
z^k_\a([\g,x]_k)&=&\frac{1}{k}\Big\{\frac{\g_\a^{(k)}(0)}{(k-1)!}+\m_\a[\g_\a(0),\g_\a^{(1)}(0)]\frac{\g_\a^{(k-1)}(0)}{(k-2)!}+\dots\\
&&+\stackrel{k-1}{M}_\a
[\g_\a(0),\g_\a^{(1)}(0),\dots,\frac{\g_\a^{(k-1)}(0)}{(k-1)!}]\g_\a^{(1)}(0)\Big\}.
\end{eqnarray*}
Moreover setting $\P_{\b\a}^k=\P_\b^k\o{{\P}_\a^k}^{-1}$, $\p_{\b\a}=\p_\b\o\p_\a^{-1}$ and $U_{\b\a}=U_\a\cap U_\b$  the transition map
\begin{equation*}
{{\P}_{\b\a}^k}:U_{\b\a}\to GL(\E^k)
\end{equation*}
is given by $
{{\P}_{\b\a}^k}(x)(\xi_1,\xi_2,\dots,\xi_k)=\Big(\p_{\b\a}(x),d\p_{\b\a}(x)\xi_1,...,d\p_{\b\a}(x)\xi_k\Big)$ that is $T^kM$, as a vector bundle, is isomorphic to $\oplus_{i=1}^kTM$.
\end{The}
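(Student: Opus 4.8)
The plan is to verify that the family $\{({\pi^k_M}^{-1}(U_\alpha),\Phi^k_\alpha)\}_{\alpha\in I}$ satisfies the hypotheses of the standard construction of a vector bundle from a trivializing cover, namely: each $\Phi^k_\alpha$ is a fibre-preserving bijection onto $\phi_\alpha(U_\alpha)\times\mathbb{E}^k$ which is simultaneously a smooth chart for the fibre bundle of Theorem \ref{fibre bundle structure for TkM}, and the transition maps $\Phi^k_{\beta\alpha}:=\Phi^k_\beta\circ(\Phi^k_\alpha)^{-1}$ are smooth with values in $GL(\mathbb{E}^k)$. For the first point, write $\gamma_\alpha=\phi_\alpha\circ\gamma$ and compare $\Phi^k_\alpha$ with the fibre bundle chart $\phi^k_\alpha$. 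The defining formulas show that, for $2\le j\le k$, the coordinate $z^j_\alpha([\gamma,x]_k)$ equals $\tfrac{1}{j!}\gamma_\alpha^{(j)}(0)$ (the $(j+1)$-st coordinate of $\phi^k_\alpha([\gamma,x]_k)$) plus an expression built solely from $\gamma_\alpha(0),\gamma_\alpha^{(1)}(0),\dots,\gamma_\alpha^{(j-1)}(0)$ through the connection components $\stackrel{1}{M}_\alpha,\dots,\stackrel{j-1}{M}_\alpha$ of Lemma \ref{local form of a connection map}. Hence $\Phi^k_\alpha=\Theta_\alpha\circ\phi^k_\alpha$ for a map $\Theta_\alpha$ of $\phi_\alpha(U_\alpha)\times\mathbb{E}^k$ which is the identity on the base coordinate and the first $\mathbb{E}$-factor and is ``unitriangular'' in the remaining factors; such a $\Theta_\alpha$ is a global smooth diffeomorphism whose inverse is obtained by solving for the coordinates recursively in the order $j=2,\dots,k$. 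In particular $\Phi^k_\alpha$ inherits from $\phi^k_\alpha$ well-definedness on $\approx^k_x$-classes and is a fibre-preserving diffeomorphism onto $\phi_\alpha(U_\alpha)\times\mathbb{E}^k$.

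The substantive part is the computation of $\Phi^k_{\beta\alpha}$. Fix $[\gamma,x]_k$ with $x\in U_\alpha\cap U_\beta$ and put $\gamma_\alpha=\phi_\alpha\circ\gamma$, $\gamma_\beta=\phi_\beta\circ\gamma=\phi_{\beta\alpha}\circ\gamma_\alpha$. The claimed form of $\Phi^k_{\beta\alpha}$ is equivalent to the $k$ identities
\begin{align*}
\gamma_\beta^{(1)}(0)&=d\phi_{\beta\alpha}(\gamma_\alpha(0))\,\gamma_\alpha^{(1)}(0),\\
z^j_\beta([\gamma,x]_k)&=d\phi_{\beta\alpha}(\gamma_\alpha(0))\,z^j_\alpha([\gamma,x]_k),\qquad 2\le j\le k.
\end{align*}
The first is the ordinary chain rule; the rest I would prove by induction on $j$. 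Expand $\gamma_\beta^{(j)}(0)$ by the higher order chain rule (\ref{higher order chain rule}), isolating the leading term $d\phi_{\beta\alpha}(\gamma_\alpha(0))\gamma_\alpha^{(j)}(0)$ from the corrections involving $d^i\phi_{\beta\alpha}$ with $i\ge 2$; on the other side, substitute the transformation rule relating $\stackrel{i}{M}_\beta$ to $\stackrel{i}{M}_\alpha$ on overlapping charts, which is precisely the instance $M=N$, $g=\mathrm{id}_M$ of the local compatibility condition (\ref{g related local compatiblity condition}) recorded in Remark \ref{Rem transformation rule for Mi a and M i b}, and apply the induction hypothesis to the intermediate-order terms $\stackrel{i}{M}_\beta(\cdot)\gamma_\beta^{(j-i)}(0)$. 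The corrections produced by the chain rule and the inhomogeneous (non-tensorial) terms in the transformation law for the $\stackrel{i}{M}$'s are designed to cancel, leaving exactly $d\phi_{\beta\alpha}(\gamma_\alpha(0))z^j_\alpha([\gamma,x]_k)$; this cancellation is natural once one notes that the $z^j_\alpha$ are patterned on the components of the connection map $K$ of Lemma \ref{local form of a connection map}. Bookkeeping the multinomial coefficients $a^k_{(l_1,\dots,l_i)}$ through this cancellation is the one genuinely laborious step and the main obstacle, but it runs parallel to — indeed is the degenerate case $g=\mathrm{id}$ of — the computation already carried out in the proof of Proposition \ref{theorem lifts of related connections remain related}, so it can be transcribed with only notational changes.

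Granting the transition formula ${\Phi^k_{\beta\alpha}}(x)(\xi_1,\dots,\xi_k)=\big(\phi_{\beta\alpha}(x),d\phi_{\beta\alpha}(x)\xi_1,\dots,d\phi_{\beta\alpha}(x)\xi_k\big)$, the remainder is formal. For each pair $(\alpha,\beta)$ the map $x\mapsto{\Phi^k_{\beta\alpha}}(x)$ is smooth and takes values in $GL(\mathbb{E}^k)$, since it is block-diagonal with every block the toplinear isomorphism $d\phi_{\beta\alpha}$ of $\mathbb{E}$; and the cocycle identity ${\Phi^k_{\gamma\beta}}\cdot{\Phi^k_{\beta\alpha}}={\Phi^k_{\gamma\alpha}}$ on triple overlaps follows from $d\phi_{\gamma\beta}\circ d\phi_{\beta\alpha}=d\phi_{\gamma\alpha}$. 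The standard vector bundle construction then equips $\pi^k_M:T^kM\to M$ with a vector bundle structure of fibre type $\mathbb{E}^k$ and structure group $GL(\mathbb{E}^k)$, for which the $\Phi^k_\alpha$ are the defining trivializations; this structure refines the fibre bundle structure of Theorem \ref{fibre bundle structure for TkM} because $\Phi^k_\alpha$ and $\phi^k_\alpha$ differ by the fibrewise diffeomorphism $\Theta_\alpha$. Finally, relative to the atlas $\{(U_\alpha,\phi_\alpha)\}_{\alpha\in I}$ of $M$ the Whitney sum $\oplus_{i=1}^k TM$ is described by trivializations with the very same transition cocycle $x\mapsto\mathrm{diag}(d\phi_{\beta\alpha}(x),\dots,d\phi_{\beta\alpha}(x))$; hence the bundle map over $\mathrm{id}_M$ which in the charts $\Phi^k_\alpha$ and the corresponding charts of $\oplus_{i=1}^k TM$ is the identity is a well-defined vector bundle isomorphism $T^kM\cong\oplus_{i=1}^k TM$, completing the proof.
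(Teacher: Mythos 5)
Your proposal is sound, but note that the paper itself offers no proof of this theorem: it is imported verbatim from the author's earlier work \cite{Suri Osck} (``according to \cite{Suri Osck} we have the following main theorem''), so there is no in-paper argument to compare against line by line. Your reconstruction is the natural one and is consistent with what the paper does say elsewhere: the factorization $\P_\a^k=\Theta_\a\o\p_\a^k$ with $\Theta_\a$ unitriangular in the fibre correctly reduces everything to the single identity $z^j_\b=d\p_{\b\a}(\g_\a(0))\,z^j_\a$, and the remark following Theorem \ref{T^kg becomes a v.b morphism} confirms that this identity is exactly the $M=N$, $g=id_M$ instance of the fibre-linearity computation. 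One small correction of attribution: the computation you would be transcribing is not really the one in Proposition \ref{theorem lifts of related connections remain related} (which establishes that \emph{lifted} connections of $g$-related connections satisfy the compatibility condition (\ref{g related local compatiblity condition})), but rather Steps 1--2 in the proof of Theorem \ref{T^kg becomes a v.b morphism}, which derive $z^j_\b=dg_{\b\a}(x)z^j_\a$ \emph{from} that compatibility condition via Lemma \ref{Lem. d^k/ds d^k-1 t(f o c)(t,s)}. For $g=id_M$ the required compatibility condition is just the transformation rule that the local components $\stackrel{i}{M}_\a$, $\stackrel{i}{M}_\b$ of any globally defined connection map must satisfy on overlaps (Remark \ref{Rem transformation rule for Mi a and M i b}), so no circularity arises from the fact that Theorem \ref{T^kg becomes a v.b morphism} as stated presupposes the vector bundle structure being constructed here. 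The only genuine debt left in your write-up is the asserted cancellation of the multinomial corrections against the inhomogeneous terms of the transformation law, which you flag but do not carry out; that is indeed where all the work lies, and it is precisely the content the paper delegates to \cite{Suri Osck}.
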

The converse of the above theorem is also true i.e.  if $\pi^k_M:T^kM\to M$, for some $k\geq 2$, admits a vector bundle
structure isomorphic to $\oplus_{i=1}^kTM$, then  a linear connection on $M$ can be defined \cite{Suri Osck}.
\begin{Rem}
One can replace the induced connection map in theorem \ref{osc k admits a vb} with a general connection map (in the sense of definition \ref{Def Connection map}) and prove the theorem in a similar fashion.
One reason for using this rather elaborate model (induced connection maps)  is that it permits a  concrete way of constructing.
\end{Rem}
\begin{Rem}
For $i< k$
\begin{eqnarray*}
\pi_M^{k,i}:T^kM&\to &T^iM\\
{[\g,x]}_k&\mto& {[\g,x]}_i
\end{eqnarray*}
also admits a vector bundle structure \cite{Suri Osck}.
\end{Rem}
%
%
%
%
\begin{Rem}
If the base manifold $M$ is $C^k$-partitionable, then the existence of  a connection on $M$  and equivalently  a vector bundle structure on $\pi^k_M:T^kM\to M$ is guaranteed (see also \cite{ali} p. 94).

However,  existence of a $C^k$
partition of unity on the discussing manifolds puts some restrictions on the model spaces.
According to \cite{Kunak} if $M$ is a paracompact $C^k$ manifold
modeled on a separable $SC^p$ Banach space with $p\geq
max\{2,k\}$, then $M$ admits a $C^k$ partition of unity. As a
corollary  every paracompact $C^k$  manifold modeled on a
separable Hilbert space admits a $C^k$ partition of unity
\cite{ Kling, Kunak, Lang}.
\end{Rem}
As we have shown, the vector bundle structure on $T^kM$ depends heavily on the chosen  linear connection (see e.g. example \ref{Example convex combination}).
In the next section we ask about the extent to which this vector bundle  structure remains isomorphic.
%
%
%

\section{$T^kg $ as a vector bundle morphism}\label{section Tkg as a vb morphism}
For a differentiable map $g:M\to N$, in contrast to  $T^1g=Tg:TM\to TN$, the tangent map $T^kg$, even for $k=2$, is  not necessarily
a vector bundle morphism \cite{Dod-Gal-Vass, ali}. In this section first, we investigate under what conditions $T^kg$ becomes a vector bundle morphism.

Let $K_M$ and $K_N$ be two connection maps on $T^kM$ and $T^kN$ respectively, possibly, induced by linear connection $\nabla_M$ and $\nabla_N$.
For $[\g,x]_k\in T^kM$ suppose that $({\pi_M^k}^{-1}(U_\a),\Phi_\a^k)$ and $( {\pi_N^k}^{-1}(V_\b),\Psi_\b^k)$
be two vector bundle  trivializations (given by theorem \ref{osc k admits a vb}) around $x\in M$ and $g(x)\in N$ respectively.  \\
For $(x,\xi_1,\dots,\xi_k)\in U_\a\times\E^k$,
define the curve $\bar{\mu}_k$ inductively as follows; $\bar{\mu}_1(t)=x+t\xi_1$, $\bar{\mu}_2(t)=\bar{\mu}_1(t)+\frac{t^2}{2}\{2\xi_2-\st{1}{M}_\a(x,\xi_1)\xi_1\}$ and for $i\geq 2$,
\begin{eqnarray*}
&{\bar{\mu}}_i(t)=\bar{\mu}_{i-1}(t)+\frac{t^i}{i}\{i\xi_i-\m_\a(x,\xi_1)\frac{\bar{\mu}^{(i-1)}_{i-1}(0)}{(i-2)!}
-\dots&\\
&\stackrel{i-1}{M}_\a(x,\xi_1,\frac{1}{2!}\bar{\mu}_{i-1}^{(2)}(0),...,\frac{1}{(i-1)!}\bar{\mu}_{i-1}^{(i-1)}(0))\xi_1\}.&
\end{eqnarray*}
%
%
%
In order to reduce the intricate computations as much as possible,  we will use the following lemma.
\begin{Lem}\label{Lem. d^k/ds d^k-1 t(f o c)(t,s)}
Fix the positive integer $k\geq 2$ and suppose that $\bar{\mu}:=\bar{\mu}_k$ be the  curve  defined above, $\mathcal{O}\subseteq \E$ be open  and $f:\mathcal{O}\to \E$ be any smooth map. Then
\begin{equation}\label{Eq. d^k/ds d^k-1 t(f o c)(t,s)}
\frac{\partial^{k}}{\partial s\partial t^{k-1}}(f\o \bar{d}_k)(t,s)|_{t=s=0}=(f\o \bar{\mu})^{(k)}(t)|_{t=0}
\end{equation}
where
\begin{equation*}
\bar{d}_k:(-\eps,\eps)^2\to \E ~;~~ (t,s)\mto \sum_{i=1}^{k-1}\frac{t^i}{i!}(\bar{\mu}^{(i)}(0)+s\bar{\mu}^{(i+1)}(0)).
\end{equation*}
\end{Lem}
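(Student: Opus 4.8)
The plan is to compare the two sides of \eqref{Eq. d^k/ds d^k-1 t(f o c)(t,s)} by expanding both via the higher order chain rule \eqref{higher order chain rule}, and to observe that the mixed partial on the left, when both $t$ and $s$ are set to zero, only sees the \emph{first-order part in $s$} of the inner map $\bar d_k$. First I would record the key structural fact about $\bar d_k$: writing $\bar\mu^{(i)}(0)=:a_i$, we have $\bar d_k(t,s)=\sum_{i=1}^{k-1}\frac{t^i}{i!}a_i+s\sum_{i=1}^{k-1}\frac{t^i}{i!}a_{i+1}$, so that $\bar d_k(t,0)=\sum_{i=1}^{k-1}\frac{t^i}{i!}a_i$ agrees with the $(k-1)$-jet of $\bar\mu$ at $0$, and $\frac{\partial}{\partial s}\bar d_k(t,s)\big|_{s=0}=\sum_{i=1}^{k-1}\frac{t^i}{i!}a_{i+1}$, which is precisely $\frac{d}{dt}$ of the $(k-1)$-jet of $\bar\mu$, up to the top-order term. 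The point is that $\partial_s$ at $s=0$ followed by evaluating the remaining $t$-derivatives at $t=0$ never involves any $s^2$ or higher term, so $\bar d_k$ may as well be replaced by its linearization in $s$, which is exactly how it is already written.

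Next I would apply $\partial_s$ to $(f\circ\bar d_k)(t,s)$ by the ordinary chain rule: $\partial_s(f\circ\bar d_k)(t,s)=df(\bar d_k(t,s))\big(\partial_s\bar d_k(t,s)\big)$. Setting $s=0$ gives $df\big(\sum_{i=1}^{k-1}\tfrac{t^i}{i!}a_i\big)\big(\sum_{i=1}^{k-1}\tfrac{t^i}{i!}a_{i+1}\big)$. Now I would take $\partial_t^{k-1}$ of this and evaluate at $t=0$, using the Leibniz/Fa\`a di Bruno expansion. On the other side, $(f\circ\bar\mu)^{(k)}(0)$ expands by \eqref{higher order chain rule} as a sum over tuples $(l_1,\dots,l_j)$ with $\sum l_r=k$ of $a^k_{(l_1,\dots,l_j)}\, d^jf(\bar\mu(0))\big(\bar\mu^{(l_1)}(0),\dots,\bar\mu^{(l_j)}(0)\big)$. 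The combinatorial heart of the argument is the bijection between the terms: every tuple summing to $k$ with $j$ entries contributes, after distinguishing which of the $j$ slots carries the ``$+1$'' coming from the $\partial_s$ differentiation, exactly the terms produced by differentiating $df(\cdots)(\cdots)$ a further $k-1$ times in $t$; the multinomial coefficients $a^k_{(\cdot)}$ match because the $\partial_s\partial_t^{k-1}$ count of a monomial $t^{l_1+\cdots+l_j}$ with $l_1+\cdots+l_j=k$ reproduces the same factor $\frac{k!}{l_1!\cdots l_j!m_1!\cdots m_k!}$ after accounting for the symmetry factors $m_j$. Since all $\bar\mu^{(i)}(0)$ for $i\le k$ appear and $\bar d_k$ only retains $t$-powers up to $t^{k-1}$, no term of order $>k$ intrudes and nothing of order $\le k$ is lost, because the missing top term $\frac{t^k}{k!}a_k$ in $\bar d_k(t,0)$ is killed either by $\partial_s$ (it has no $s$) or, if it appeared in the $s$-linear part, would contribute only at $t$-order $\ge k$ against $\partial_t^{k-1}$ at $0$ — hence the two expansions coincide term by term.

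The main obstacle I anticipate is purely bookkeeping: verifying that the multinomial coefficients $a^k_{(l_1,\dots,l_j)}$ of \eqref{higher order chain rule} are reproduced correctly once the $\partial_s$-derivative is absorbed into one of the arguments of $d^jf$. A clean way to handle this, and the route I would actually take in writing it up, is to avoid the explicit coefficient matching altogether: introduce the two-variable map $F(t,s):=(f\circ\bar d_k)(t,s)$ and the one-variable map $G(t):=(f\circ\bar\mu)(t)$, note that $\bar\mu(t)$ and $\sum_{i=1}^{k-1}\frac{t^i}{i!}a_i+\frac{t}{1}\cdot$ (nothing) differ only at $t$-order $\ge k$, and use the elementary fact that for a smooth curve $c(t)$ and its ``shifted'' family $c_s(t):=c(t)+s\,c'(t)$ (to first order in $s$) one has $\frac{\partial}{\partial s}\big|_{s=0}(f\circ c_s)(t)=\frac{d}{dt}(f\circ c)(t)$; applying $\partial_t^{k-1}$ and evaluating at $0$ then yields $(f\circ c)^{(k)}(0)$ directly. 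Matching the jets of $\bar d_k(t,0)$ and of the $s$-coefficient with those of $\bar\mu$ up to the relevant order completes the proof without ever expanding the chain rule by hand.
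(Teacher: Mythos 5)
Your primary plan (expand both sides with the chain rule \eqref{higher order chain rule} and match multinomial coefficients) is exactly what the paper's appendix does: it applies that formula to $\partial_t^{k-1}(f\circ\bar d_k)(\cdot,s)$, then differentiates in $s$ at $s=0$ and regroups the resulting coefficients ($1+(k-1)=k$, $\tfrac{k(k-1)}{2}$, etc.) to recover $(f\circ\bar\mu)^{(k)}(0)$. The alternative you say you would actually write up is genuinely different and cleaner: writing $\bar d_k(t,s)=P(t)+sQ(t)$ with $P$ the $(k-1)$-st Taylor polynomial of $\bar\mu$ at $0$ and $Q$ the derivative of the $k$-th Taylor polynomial, one has $\partial_s|_{s=0}(f\circ\bar d_k)=df(P)(Q)$; splitting $Q=P'+\tfrac{t^{k-1}}{(k-1)!}\bar\mu^{(k)}(0)$ gives $\tfrac{d}{dt}(f\circ P)+\tfrac{t^{k-1}}{(k-1)!}\,df(P(t))\big(\bar\mu^{(k)}(0)\big)$, and applying $\partial_t^{k-1}|_{t=0}$ yields $(f\circ P)^{(k)}(0)+df(x)\big(\bar\mu^{(k)}(0)\big)=(f\circ\bar\mu)^{(k)}(0)$, because $(f\circ P)^{(k)}(0)$ differs from $(f\circ\bar\mu)^{(k)}(0)$ by exactly the one chain-rule term $df(x)\big(\bar\mu^{(k)}(0)\big)$ in which the $k$-th derivative of the inner curve appears. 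This buys you freedom from all coefficient bookkeeping, at the price of two points you must get right in the final write-up. First, your parenthetical claim that the top term of the $s$-linear part ``would contribute only at $t$-order $\ge k$'' is wrong as stated: that term is $\tfrac{t^{k-1}}{(k-1)!}\bar\mu^{(k)}(0)$, it does survive $\partial_t^{k-1}|_{t=0}$, and it is indispensable --- it is precisely what supplies $df(x)\big(\bar\mu^{(k)}(0)\big)$ on the right-hand side; your earlier phrase ``up to the top-order term'' has the correct picture, so carry out that accounting explicitly rather than dismissing the term. Second, as printed the sum defining $\bar d_k$ starts at $i=1$, so $\bar d_k(0,s)=0$ rather than $x+s\xi_1$; the paper's own appendix proof silently includes the $i=0$ term (it evaluates $df$ at $x+s\xi_1$), and your identification of $\bar d_k(t,0)$ with the $(k-1)$-jet of $\bar\mu$ requires that same correction.
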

\begin{proof}
See appendix.
\end{proof}
%
%
%
Now, suppose that ${\mu}:=\p_\a^{-1}\o\bar{\mu}$. Then  the local representation
of $T^kg$ is
\begin{eqnarray*}
\Psi_\b^k\o T^kg\o{\Phi_\a^k}^{-1}(x,\xi_1,\dots,\xi_k)=\Psi_\b^k\o T^kg([{\mu},x]_k)=\Psi_\b^k([g\o{\mu},g(x)]_k).
\end{eqnarray*}
With the  fact $\s_\b\o g\o{\mu}=\s_\b\o g\o\p_\a^{-1}\o\p_\a\o{\mu}:=g_{\b\a}\o\bar{\mu}$ in mind, we apply
the vector bundle trivialization of theorem \ref{osc k admits a vb} to $[g\o{\mu},g(x)]_k$ and we get
\begin{eqnarray*}
\Psi_\b^k([g\o{\mu},g(x)]_k)=\Big( g_{\b\a}(x),dg_{\b\a}(x)\xi_1,z_\b^1([{\mu}\o g,g(x)]_k),\dots,z_\b^k([{\mu}\o g,g(x)]_k) \Big)
\end{eqnarray*}
However,  for $2\leq i\leq k$ we have
\begin{eqnarray}\label{local form of T^kg primary}
&&iz_\b^i([g\o {\mu},g(x)]_k)\\
\nonumber&&= dg_{\b\a}(x)\Big[i\xi_i-\stackrel{1}{M}_\a(v_1)\frac{(\bar{\mu})^{(i-1)}(0)}{(i-2)!}-\dots-\stackrel{i-1}{M}_\a( v_{i-1})(\bar{\mu})^{(1)}(0)\Big] \\
\nonumber&&+\frac{1}{(i-1)!}\Big\{ \sum_{l_1+l_2=i}a_{(l_1,l_2)}^i d^2g_{\b\a}(x)\Big((\bar{\mu})^{(l_1)}(0),(\bar{\mu})^{(l_2)}(0)\Big)+\dots\\
\nonumber& & + d^ig_{\b\a}(x)\Big( (\bar{\mu}^{(1)})(0),\dots, (\bar{\mu}^{(1)})(0)\Big)\Big\}\\
\nonumber&&+\stackrel{1}{N}_\b\big(   \bar{v}_1   \big)\frac{(g_{\b\a}\o\bar{\mu})^{(i-1)}(0)}{(i-2)!}+\dots
+\stackrel{i-1}{N}_\b\big(  \bar{v}_{i-1}  \big)
(g_{\b\a}\o\bar{\mu})^{(1)}(0)
\end{eqnarray}
where $v=\big(\bar{\mu}(0),\bar{\mu}^{(1)}(0),\dots,\frac{\bar{\mu}^{k}(0)}{k!}\big) $, $v_j=Proj_{j+1}(v)$,
\begin{eqnarray*}
\bar{v}&=&\big((g_{\b\a}\o\bar{d})(0,0), \frac{\partial}{\partial t} (g_{\b\a}\o\bar{d})(0,0)
,\dots,\frac{\partial^{k}}{\partial t^{k}}\frac{(g_{\b\a}\o\bar{d})(0,0)}{k!}\big)\\
&=&\big((g_{\b\a}\o\bar{{\mu}})(0), (g_{\b\a}\o\bar{\mu})^{(1)}(0)
,\dots,\frac{(g_{\b\a}\o\bar{{\mu}})^{(k)}(0)}{k!}\big)
\end{eqnarray*}
and $\bar{v}_j=Proj_{j+1}(\bar{v})$.

As we can see now, due to the presence of higher order derivatives, as $k$ increases, (generally) it becomes increasingly difficult
for  $T^kg$ to be a vector bundle  morphism.

To get around this difficulty, let $K_M$ and $K_N$ be  connection maps which are induced by linear connections  $\nabla_M$ and $\nabla_N$ respectively.
%
%
\begin{The}\label{T^kg becomes a v.b morphism}
 If $\nabla_M$ and $ \nabla_N$ are $g$-related then, $T^kg:T^kM\to T^kN$ becomes a vector bundle  morphism.
\end{The}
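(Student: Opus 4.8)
The plan is to show that the local representation of $T^kg$ with respect to the vector bundle trivializations $\Phi_\a^k$ and $\Psi_\b^k$ of Theorem \ref{osc k admits a vb} is linear on fibres; since the transition functions of these trivializations are linear (they lie in $GL(\E^k)$), linearity of the local representative at one point propagates to a genuine vector bundle morphism $(T^kg,g)$. Concretely, I would start from the formula (\ref{local form of T^kg primary}) for $iz_\b^i([g\o\mu,g(x)]_k)$ and aim to rewrite its right-hand side as $iz_\b^i$ evaluated via the curve $\bar d_k$, i.e. identify it with $\bar\e_i$-type expressions built from $(g_{\b\a}\o\bar d)(t,s)$, using Lemma \ref{Lem. d^k/ds d^k-1 t(f o c)(t,s)} to replace the mixed partials $\frac{\partial^{i}}{\partial s\partial t^{i-1}}(f\o\bar d_i)$ by ordinary derivatives $(f\o\bar\mu)^{(i)}$. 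The point of the auxiliary curve $\bar\mu_k$ is precisely that it was engineered so that the $z$-coordinates of $[\mu,x]_k$ are $(\xi_1,\dots,\xi_k)$, so the linear dependence on $(\xi_1,\dots,\xi_k)$ is built in on the source side; the task is to see it survive on the target side.

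The core computation is an induction on $i$ from $1$ to $k$. For $i=1$ the statement is just $z_\b^1([g\o\mu,g(x)]_k)=dg_{\b\a}(x)\xi_1$, which is the linearity of $Tg=T^1g$ and is immediate. For the inductive step, I would take the $g$-related local compatibility condition (\ref{g related local compatiblity condition}) together with Proposition \ref{theorem lifts of related connections remain related}, which guarantees that the lifted connection maps $K_M$ and $K_N$ are themselves $g$-related, hence their local components $\st{i}{M}_\a$ and $\st{i}{N}_\b$ satisfy (\ref{g related local compatiblity condition}). Feeding this into (\ref{local form of T^kg primary}): the terms $dg_{\b\a}(x)[\,-\st{j}{M}_\a(v_j)(\bar\mu)^{(i-j)}(0)\,]$ combine, via the compatibility condition, with the higher-order chain-rule terms $d^jg_{\b\a}(x)(\bar\mu^{(l_1)}(0),\dots)$ and reassemble into the $\st{j}{N}_\b(\bar v_j)(g_{\b\a}\o\bar\mu)^{(i-j)}(0)$ terms plus a leading $\bar\e_i$-term. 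This is bookkeeping essentially parallel to the telescoping/counting argument already carried out in the proof of Proposition \ref{theorem lifts of related connections remain related} (the $\frac{\partial}{\partial h}$ manipulations and the final coefficient collapse $(1+(k-1))$, etc.), so I would explicitly invoke that proof's combinatorial identities rather than redo them. The upshot is $iz_\b^i([g\o\mu,g(x)]_k)=i\,z_\b^i$ expressed purely as a fixed linear combination (with coefficients the connection components evaluated at $\bar v$, which depend only on $x$, not on the fibre variables through the relevant slots) of $\xi_1,\dots,\xi_i$; equivalently, writing $\Phi^k_{\b\a}$ for the local representative,
\begin{equation*}
\Psi_\b^k\o T^kg\o{\Phi_\a^k}^{-1}(x,\xi_1,\dots,\xi_k)=\big(g_{\b\a}(x),\,A_1(x)\xi_1,\,A_2(x)(\xi_1,\xi_2),\,\dots\big)
\end{equation*}
with each component linear in $(\xi_1,\dots,\xi_k)$.

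Having established fibrewise linearity of the local representative over each $U_\a$ with $g(U_\a)\subseteq V_\b$, I would finish by noting that smoothness of $x\mapsto$ (these linear maps) follows from smoothness of $g_{\b\a}$ and of the connection components, and that the family of such local representatives is compatible on overlaps because the transition maps $\Phi_{\b\a}^k$ and $\Psi_{\b\a}^k$ are $GL(\E^k)$-valued (Theorem \ref{osc k admits a vb}); hence they glue to a global bundle morphism $(T^kg,g):(T^kM,\pi_M^k,M)\to(T^kN,\pi_N^k,N)$ over $g$. The main obstacle is the middle step: organizing the right-hand side of (\ref{local form of T^kg primary}) so that the chain-rule multinomial terms $a^i_{(l_1,\dots,l_j)}d^jg_{\b\a}(x)(\cdots)$ and the $dg_{\b\a}(x)\,\st{j}{M}_\a$ terms cancel/recombine precisely into the $\st{j}{N}_\b$ terms. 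I expect this to reduce, after applying Lemma \ref{Lem. d^k/ds d^k-1 t(f o c)(t,s)}, to exactly the identity proved inside Proposition \ref{theorem lifts of related connections remain related}; the delicate point is matching the curves (the $\bar\mu_i$ here versus the $\bar c_i,\bar\g_i$ there) and checking the arguments $\bar v_j$ of $\st{j}{N}_\b$ coincide with the $\bar u_j$ appearing in the compatibility condition. Once that identification is made, linearity is automatic.
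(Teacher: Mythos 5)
Your proposal follows essentially the same route as the paper's proof: fibrewise linearity of $\Psi_\b^k\o T^kg\o{\Phi_\a^k}^{-1}$ is obtained by feeding the compatibility condition (\ref{g related local compatiblity condition}) for the lifted $g$-related connection maps (Proposition \ref{theorem lifts of related connections remain related}) into formula (\ref{local form of T^kg primary}), using Lemma \ref{Lem. d^k/ds d^k-1 t(f o c)(t,s)} to trade the mixed partials along $\bar{d}_k$ for ordinary derivatives along $\bar{\mu}$, after which the multinomial chain-rule terms cancel against the connection terms. The only differences are cosmetic: the paper needs no separate induction on $i$ (the compatibility condition already holds for every $i$), and its cancellation is sharper than you anticipate, yielding the diagonal form $dg_{\b\a}(x)\xi_i$ in the $i$-th slot rather than a general lower-triangular $A_i(x)(\xi_1,\dots,\xi_i)$ — though either form suffices for linearity.
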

\begin{proof}
If $[{\mu},x]_k\in T^k_xM$, then  $\pi_N^k\o T^kg ([{\mu},x]_k)=\pi_N^k([g\o{\mu},g(x)]_k)=g(x)=g\o\pi_M^k([{\mu},x]_k)$ means
that $T^kg$ is fibre preserving.   Now  consider the trivializations $({\pi_M^k}^{-1}(U_\a),\P_\a^k)$ and $({\pi_N^k}^{-1}(V_\b),\S_\b^k)$, as introduced in
theorem \ref{osc k admits a vb},  around $x$ and $g(x)$ respectively.
Our main task is to show that $\Psi_\b^k\o T^kg\o{\Phi_\a^k}^{-1}$ is linear on fibres.


\textbf{Step 1.} Setting $\e_i:=\frac{1}{(i-1)!}\bar{\mu}^{(i)}(0)$, $y:=\bar{\mu}^{(1)}(0)$ and $\xi_i:=\frac{1}{i!}\bar{\mu}^{(i)}(0)$ in  compatibility condition (\ref{g related local compatiblity condition}), and using lemma \ref{Lem. d^k/ds d^k-1 t(f o c)(t,s)} with $g_{\b\a}=f$ we get
\begin{eqnarray*}
&dg_{\b\a}(x)\frac{\bar{\mu}^{(i)}(0)}{(i-1)!} +  dg_{\b\a}(x)\st{1}{M}_\a(v_1) \frac{\bar{\mu}^{(i-1)}(0)}{(i-2)!}
\dots+  dg_{\b\a}(x) \st{i-1}{M}_\a(v_{i-1})\bar{\mu}^{(1)}(0)&
\\
&=\frac{\partial^i}{\partial s\partial t^{i-1}}\frac{(g_{\b\a}\o\bar{d}_k)(0,0)}{(i-1)!}+\st{1}{N}_\b(\bar{v}_1)
\frac{\partial^{i-1}}{\partial s\partial t^{i-2}}\frac{(g_{\b\a}\o\bar{d}_k)(0,0)}{(i-2)!}+&\\
&\dots+\st{i-1}{N}_\b(\bar{v}_{i-1})\frac{\partial}{\partial s}(g_{\b\a}\o\bar{d}_k)(0,0)&\\
&=\frac{(g_{\b\a}\o\bar{\mu})^{(i)}(0)}{(i-1)!}+\st{1}{N}_\b(\bar{v}_1)\frac{(g_{\b\a}\o\bar{\mu})^{(i-1)}(0)}{(i-2)!}+
\dots+\st{i-1}{N}_\b(\bar{v}_{i-1})(g_{\b\a}\o\bar{\mu})^{(1)}(0).&
\end{eqnarray*}
As a consequence we obtain
\begin{eqnarray}\label{step 1}
&&
\st{1}{N}_\b(\bar{v}_1)\frac{(g_{\b\a}\o\bar{\mu})^{(i-1)}(0)}{(i-2)!}+
\dots\st{i-1}{N}_\b(\bar{v}_{i-1})(g_{\b\a}\o\bar{\mu})^{(1)}(0)\\
\nonumber&&=dg_{\b\a}(x)\frac{\bar{\mu}^{(i)}(0)}{(i-1)!} +  dg_{\b\a}(x)\st{1}{M}_\a(v_1) \frac{\bar{\mu}^{(i-1)}(0)}{(i-2)!}+
\dots\\
\nonumber&&\hspace{5mm}+  dg_{\b\a}(x) \st{i-1}{M}_\a(v_{i-1})\bar{\mu}^{(1)}(0)-\frac{(g_{\b\a}\o\bar{\mu})^{(i)}(0)}{(i-1)!}.
\end{eqnarray}
%
%
\textbf{Step 2.}
We now  apply the previous observation  to (\ref{local form of T^kg primary}) and we get
\begin{eqnarray*}
&i z_\b^k([g\o{\mu},g(x)]_k)&   \\
&=idg_{\b\a}(x)\xi_i+\frac{1}{(i-1)!}\Big\{ \sum_{l_1+l_2=i}a_{(l_1,l_2)}^i d^2g_{\b\a}(x)\Big((\bar{\mu})^{(l_1)}(0),(\bar{\mu})^{(l_2)}(0)\Big)&\\
& \hspace{5mm}+\dots+ d^ig_{\b\a}(x)\Big( (\bar{\mu}^{(1)})(0),\dots, (\bar{\mu}^{(1)})(0)\Big)\Big\}+dg_{\b\a}(x)\frac{\bar{\mu}^{(i)}(0)}{(i-1)!}&\\
&\hspace{5mm}-\frac{(g_{\b\a}\o\bar{\mu})^{(i)}(0)}{(i-1)!}&\\
&=idg_{\b\a}(x)\xi_i+\frac{1}{(i-1)!}\Big\{ \sum_{l_1+l_2=i}a_{(l_1,l_2)}^i d^2g_{\b\a}(x)\Big((\bar{\mu})^{(l_1)}(0),(\bar{\mu})^{(l_2)}(0)\Big)&\\
& \hspace{5mm}+\dots+ d^ig_{\b\a}(x)\Big( (\bar{\mu}^{(1)})(0),\dots, (\bar{\mu}^{(1)})(0)\Big)\Big\}+dg_{\b\a}(x)\frac{\bar{\mu}^{(i)}(0)}{(i-1)!}&\\
&\hspace{5mm}-\frac{1}{(i-1)!}\Big\{dg_{\b\a}(x)\bar{\mu}^{(i)}(0)+\sum_{l_1+l_2=i}a_{(l_1,l_2)}^i d^2g_{\b\a}(x)\Big((\bar{\mu})^{(l_1)}(0)&\\
&\hspace{5mm},(\bar{\mu})^{(l_2)}(0)\Big)+\dots+ d^ig_{\b\a}(x)\Big( (\bar{\mu}^{(1)})(0),\dots, (\bar{\mu}^{(1)})(0)\Big)\Big\}&\\
&=idg_{\b\a}(x)\xi_i.&
\end{eqnarray*}
As a consequence we have
\begin{eqnarray*}
\Psi_\b^k\o T^kg\o{\Phi_\a^k}^{-1}(x,\xi_1,\dots,\xi_k)
%
%
=\Big(g_{\b\a}(x),dg_{\b\a}(x)(\xi_1),\dots, dg_{\b\a}(x)(\xi_k) \Big).
\end{eqnarray*}
This last means that $T^kg$ is fibre linear and
\begin{eqnarray*}
T^kg_{\b\a}: U_\a&\to & {\mathcal{L}}(\E^k,\E'^k)\\
x &\mto& \S_{\b,g(x)}^k\o T^k_xg\o {\P_{\a,x}^k}^{-1}
\end{eqnarray*}
is a smooth morphism which completes the proof.
\end{proof}
%
%
%
\begin{Rem}
One can replace the induced $g$-related connection maps with general $g$-related connection maps and prove  theorem \ref{T^kg becomes a v.b morphism} in an exactly similar way.
\end{Rem}
\begin{Rem}
If $M=N$ and $g=id_M$ then  theorem \ref{T^kg becomes a v.b morphism} yields
\begin{eqnarray*}
\P_\b^k\o{\P_\a^k}^{-1}:U_\a&\to & {\mathcal{L}}(\E^k,\E^k)\\
x &\mto& (d\p_\b\o\p_\a^{-1}(x)(.),\dots,d\p_\b\o\p_\a^{-1}(x)(.))
\end{eqnarray*}
as it was noted  by  theorem 3.3 of \cite{Suri Osck}.
\end{Rem}
The next corollary is a direct consequence of theorems \ref{T^kg becomes a v.b morphism} and \ref{theorem lifts of related connections remain related}.
\begin{Cor}
If $g$ is a diffeomorphism, then $T^kg$ becomes a vector bundle isomorphism.
\end{Cor}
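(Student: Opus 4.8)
The plan is to derive this as a direct consequence of Theorems \ref{T^kg becomes a v.b morphism} and \ref{theorem lifts of related connections remain related}, together with the functoriality of $T^k$. First I would fix a linear connection $\nabla_N$ on $N$ (such a connection is implicitly part of the data needed to put a vector bundle structure on $T^kN$ at all, by Theorem \ref{osc k admits a vb}). Since $g$ is a diffeomorphism, I can transport $\nabla_N$ to $M$ by setting $\nabla_M := g^*\nabla_N$, i.e. $(\nabla_M)_XY := (g^{-1})_*\big((\nabla_N)_{g_*X}\,g_*Y\big)$; this is a bona fide linear connection on $M$ and is, by construction, $g$-related to $\nabla_N$. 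Proposition \ref{theorem lifts of related connections remain related} then tells us that the induced connection maps $K_M$ on $T^kM$ and $K_N$ on $T^kN$ of Proposition \ref{lifted connection to osckM} are $g$-related, and hence Theorem \ref{T^kg becomes a v.b morphism} applies: $T^kg:T^kM\to T^kN$ is a vector bundle morphism with respect to the trivializations of Theorem \ref{osc k admits a vb} built from $\nabla_M$ and $\nabla_N$.

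Next I would observe that the $g$-relatedness condition \eqref{g related connections equation}, $K_N\o TT^kg=\oplus_{i=1}^kTg\o K_M$, is invertible when $g$ is a diffeomorphism: composing on the right with $(TT^kg)^{-1}=TT^k(g^{-1})$ and on the left with $(\oplus_{i=1}^kTg)^{-1}=\oplus_{i=1}^kT(g^{-1})$ yields $K_M\o TT^k(g^{-1})=\oplus_{i=1}^kT(g^{-1})\o K_N$, so $K_M$ and $K_N$ are also $g^{-1}$-related. Applying Theorem \ref{T^kg becomes a v.b morphism} to the smooth map $g^{-1}:N\to M$ then shows that $T^k(g^{-1}):T^kN\to T^kM$ is a vector bundle morphism as well.

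It remains to identify $T^k(g^{-1})$ as the inverse of $T^kg$. Directly from Definition \ref{Definition of T^kg}, for $[\g,x]_k\in T^kM$ we have $T^k(g^{-1})\o T^kg([\g,x]_k)=T^k(g^{-1})([g\o\g,g(x)]_k)=[g^{-1}\o g\o\g,x]_k=[\g,x]_k$, and symmetrically $T^kg\o T^k(g^{-1})=\mathrm{id}_{T^kN}$; that is, $T^k$ is functorial and $T^k(g^{-1})=(T^kg)^{-1}$. Thus $T^kg$ is a bijective vector bundle morphism whose inverse is again a vector bundle morphism, i.e. a vector bundle isomorphism, completing the proof. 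I do not anticipate a genuine obstacle here; the only points requiring a word of care are the existence and naturality of the pullback connection $g^*\nabla_N$ (so that Proposition \ref{theorem lifts of related connections remain related} can be invoked) and the symmetry of the $g$-relation under inversion of a diffeomorphism, both of which are routine.
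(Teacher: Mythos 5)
Your proposal is correct and follows essentially the same route as the paper, which states the corollary as a direct consequence of Theorems \ref{T^kg becomes a v.b morphism} and \ref{theorem lifts of related connections remain related} without further detail; you simply make explicit the steps the paper leaves implicit (pulling back the connection along $g$, the symmetry of $g$-relatedness under inversion, and the functorial identity $T^k(g^{-1})=(T^kg)^{-1}$). The only shortcut you could have taken is to note that the local form $\Psi_\b^k\o T^kg\o{\Phi_\a^k}^{-1}(x,\xi_1,\dots,\xi_k)=(g_{\b\a}(x),dg_{\b\a}(x)\xi_1,\dots,dg_{\b\a}(x)\xi_k)$ obtained in the proof of Theorem \ref{T^kg becomes a v.b morphism} is already fibrewise invertible when $g$ is a diffeomorphism, but your argument is equally valid.
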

\begin{Rem}
Let $\nabla_M$ and $\nabla_N$ be two $g$-related connections on $M$ and $N$ respectively. Consider the vector bundle structures on
$(T^kM,\pi_M^{k,i},T^iM)$ and $(T^kN,\pi_N^{k,i},T^iN)$ proposed by theorem \ref{osc k admits a vb}. Then the argument of theorem \ref{T^kg becomes a v.b morphism} can easily be modified to prove
\begin{eqnarray*}
(T^kg,T^ig):(T^kM,\pi_M^{k,i},T^iM)\to (T^kN,\pi_N^{k,i},T^iN)
\end{eqnarray*}
is also  a vector bundle morphism for $i<k$.
\end{Rem}
%
%
%
%
%
%
\section{$T^\infty g$ as a vector bundle morphism}\label{section T infty g}
As we have seen in the previous sections, at the presence of $g$-related connections on $M$ and $N$ for any $k\in \N$, $(T^kg,g)$ becomes a vector bundle morphism. If we take one step further by considering $T^\infty M$ and $T^\infty N$, as generalized Fr\'{e}chet vector bundles over $M$ and $N$ respectively (\cite{Suri Osck}), then proving $T^\infty g$ to be a generalized vector bundle morphism becomes much more complicated.

More precisely the set of linear maps between $\F:=\lim \E^k$ and $\F^\prime:=\lim {\E'}^k$ (the fibre types  of $(T^\infty M,\pi^\infty_M, M)$ and $(T^\infty N,\pi^\infty_N, N)$ respectively) does not remain in the category of Fr\'{e}chet spaces \cite{Hamilton, Sch}.

In this section employing the projective limit methodology, as in \cite{split, GAl-VB, Suri Osck, Suri Hfbk} etc., we show that at the presence of $g$-related connections on $M$ and $N$, $(T^\infty g,g)$  becomes a  vector bundle morphism.

Of course one can consider a projective system  of  $g$-related connection maps on $T^kM$ and $T^kN$, $k\in \N$ and prove the same results.

Let the notation be as in the preceding sections and for the \textbf{natural numbers $j\geq i$} consider the  projections $\pi^{j,i}_M:T^jM\to T^iM$ and $\pi^{j,i}_N:T^jN\to T^iN$ mapping $[\g,x]_j$ onto $[\g,x]_i$ as connecting morphisms of the projective families $\{T^kM\}_{k\in \N}$ and $\{T^kN\}_{k\in \N}$ (for more details see \cite{Suri Osck}, \cite{Suri Hfbk}. The family $\{T^kg\}_{k\in \N}$ form a projective system of maps since $\pi^{j,i}_N\o T^jg=T^ig\o\pi^{j,i}_M$. More precisely
\begin{eqnarray*}
\pi^{j,i}_N\o T^jg([\g,x]_j)=\pi^{j,i}_N ([g\o\g,g(x)]_j)=[g\o\g,g(x)]_i
\end{eqnarray*}
and
\begin{eqnarray*}
T^ig\o\pi^{j,i}_M([\g,x]_j)=T^ig([\g,x]_i)=[g\o\g,g(x)]_i.
\end{eqnarray*}
As a consequence the limit map $T^\infty g:=\lim T^kg$ exists and maps the thread $([\g,x]_k)_{k\in \N}\in T^\infty M:=\lim T^kM$ to $([g\o\g,x]_k)_{k\in \N}\in T^\infty N=\lim T^kN$.
It is  easily checked that, for any $x\in  M$,  the families    $\{\P_{\a,x}^k\}_{k\in\N}$ and $\{\S_{\b,g(x)}^k\}_{k\in\N}$, as in theorem \ref{osc k admits a vb}, form projective systems of trivializations with the limits  ${\P_{\a,x}^\infty}$ and $\S_{\b,g(x)}^\infty$ respectively (see also \cite{GAl-VB}).

But $T^\infty g$ seems to be far from being called a vector bundle morphism due to the
difficulties emerged in $\mathcal{L}(\F,\F')$ and therefore the problematic map
\begin{eqnarray*}
T^\infty g_{\b\a}:U_\a&\to& \mathcal{L}(\F,\F')\\
x&\mto&\S_{\b,g(x)}^\infty\o T^\infty\o g\o{\P_{\a,x}^\infty}^{-1}.
\end{eqnarray*}
To overcome this obstacle define
\begin{equation*}
\mathcal{H}(\F,\F')=\{(l_k)_{k\in\N}\in\prod_{k=1}^\infty\mathcal{L}(\E^k,\E'^k):~\r_{ji}'\o l_j=l_i\o\r_{ji},~\forall ~j\geq i\}
\end{equation*}
where  $\r_{ji}:\E^j\to \E^i$ and $\r_{ji}':{\E'}^j\to{\E'}^i$ are the projection maps to the first $i$ factors. $\mathcal{H}(\F,\F')$ is a   Fr\'{e}chet space  (\cite{GAl-VB}) isomorphic
to the projective limit of the projective system of Banach spaces $\{\mathcal{H}^k(\E^k,\E'^k)\}_{k\in\N}$ where
\begin{equation*}
\mathcal{H}^k(\E^k,\E'^k)=\{(l_i)_{1\leq i\leq k}\in\prod_{i=1}^k\mathcal{L}(\E^i,\E'^i):~\r_{ji}'\o l_j=l_i\o\r_{ji},~\forall ~1\leq i\leq j\leq k\}.
\end{equation*}
However, for any $\xi_1,\dots,\xi_j\in \E$ and $j\geq i$ we have
\begin{eqnarray*}
&&\r_{ji}'\o(\S_{\b,g(x)}^j\o T^j g\o{\P_{\a,x}^j}^{-1})(\xi_1,\dots,\xi_j)=\r_{ji}'(dg_{\b\a}(x)\xi_1,\dots,dg_{\b\a}(x)\xi_j)\\
&&=(dg_{\b\a}(x)\xi_1,\dots,dg_{\b\a}(x)\xi_i)
=(\S_{\b,g(x)}^i\o T^i g\o{\P_{\a,x}^i}^{-1})\o\r_{ji}(\xi_1,\dots,\xi_j)
\end{eqnarray*}
meaning that
$\{\S_{\b,g(x)}^k\o T^k g\o{\P_{\a,x}^k}^{-1}\}_{k\in\N}$, with the limit $\S_{\b,g(x)}^\infty\o T^\infty g\o{\P_{\a,x}^\infty}^{-1}$, belongs to $\mathcal{H}(\F,\F')$.
As a consequence $T^{\infty } g_{\b\a}:U_\a\to \mathcal{L}(\F,\F')$; $x\mto\varepsilon\o \S_{\b,g(x)}^\infty\o T^\infty g\o{\P_{\a,x}^\infty}^{-1}  $ is smooth (in the sense of Leslie and Galanis \cite{Leslie1, Leslie2, GAL-PLB, GAl-VB}) where $\varepsilon$ is the linear (and smooth) map defined by
\begin{eqnarray*}
\varepsilon:\mathcal{H}(\F,\F')&\to &\mathcal{L}(\F,\F')\\
(l_k)_{k\in\N}&\mto&\lim l_k.
\end{eqnarray*}
As a result, $T^\infty g$ is a generalized   smooth map.
%
%
%
\begin{Rem}
It is known that differential calculus in normed spaces does not have a unique canonical extension to general topological spaces (For a list of definitions see e.g. \cite{Averbuh 1, Keller}).

In our case, the problem is due to $\mathcal{L}(\F,\F')$ which is not normable. In fact $\mathcal{L}(\F,\F')$  drops out of the category of Fr\'{e}chet spaces and consequently it can not be considered as a projective limit of Banach spaces.

Considering the Fr\'{e}chet space $\mathcal{H}(\F,\F')$ allows  us to consider $\S_{\b,g(x)}^\infty\o T^\infty g\o{\P_{\a,x}^\infty}^{-1} =\lim \S_{\b,g(x)}^k\o T^k g\o{\P_{\a,x}^k}^{-1}$ as a map with values in $\mathcal{H}(\F,\F')=\lim \mathcal{H}^k(\E^k,\E'^k)$. Considering  the auxiliary Banach spaces
$\{\mathcal{H}^k(\E^k,\E'^k)\}_{k\in \N}$, the map   $\lim \S_{\b,g(x)}^k\o T^k g\o{\P_{\a,x}^k}^{-1}$ is called a (generalized) smooth map since its factors are smooth \cite{Leslie1, Leslie2, GAl-VB}.
\end{Rem}
%
%
%
Finally, following the argument of section \ref{section Tkg as a vb morphism} we get:
\begin{The}\label{Tinfty g becomes a vb morphism}
The pair $(T^\infty g,g):(T^\infty M,\pi_M^\infty,M)\to (T^\infty N,\pi_N^\infty,N)$ is a generalized vector bundle morphism. Moreover, the bundle morphism $(T^\infty g,g)$ is a vector bundle isomorphism if   $g$ is a diffeomorphism.
\end{The}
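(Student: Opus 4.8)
The plan is to assemble Theorem \ref{Tinfty g becomes a vb morphism} from the finite-order results already in hand, using the projective limit machinery set up immediately before the statement. First I would record that the fibre preservation is trivial: by construction $T^\infty g$ sends the thread $([\g,x]_k)_{k\in\N}$ to $([g\o\g,g(x)]_k)_{k\in\N}$, so $\pi_N^\infty\o T^\infty g=g\o\pi_M^\infty$, which follows directly from the compatibility $\pi_N^{j,i}\o T^jg=T^ig\o\pi_M^{j,i}$ already verified. Next I would invoke Theorem \ref{T^kg becomes a v.b morphism}: since $\nabla_M$ and $\nabla_N$ are $g$-related, for every $k\in\N$ the map $T^kg$ is a vector bundle morphism, and in the trivializations $\P_\a^k$, $\S_\b^k$ of Theorem \ref{osc k admits a vb} its local form is the diagonal map $(\xi_1,\dots,\xi_k)\mapsto(dg_{\b\a}(x)\xi_1,\dots,dg_{\b\a}(x)\xi_k)$. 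This explicit diagonal form is the crucial input, because it is manifestly compatible with the connecting projections $\r_{ji},\r_{ji}'$.

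Then I would carry out the projective-limit verification, most of which is already displayed in the excerpt: the families $\{\P_{\a,x}^k\}_k$ and $\{\S_{\b,g(x)}^k\}_k$ form projective systems of trivializations with limits $\P_{\a,x}^\infty$, $\S_{\b,g(x)}^\infty$, and the computation $\r_{ji}'\o(\S_{\b,g(x)}^j\o T^jg\o{\P_{\a,x}^j}^{-1})=(\S_{\b,g(x)}^i\o T^ig\o{\P_{\a,x}^i}^{-1})\o\r_{ji}$ shows the thread $\{\S_{\b,g(x)}^k\o T^kg\o{\P_{\a,x}^k}^{-1}\}_k$ lies in $\H(\F,\F')$. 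Composing with the linear smooth map $\varepsilon:\H(\F,\F')\to\L(\F,\F')$, $(l_k)_k\mapsto\varprojlim l_k$, gives that the local representative $T^\infty g_{\b\a}:U_\a\to\L(\F,\F')$ factors through $\H(\F,\F')=\varprojlim\H^k(\E^k,\E'^k)$; since each factor $x\mapsto\S_{\b,g(x)}^k\o T^kg\o{\P_{\a,x}^k}^{-1}$ is smooth (Theorem \ref{T^kg becomes a v.b morphism}), the limit is smooth in the Leslie--Galanis sense. This establishes that $(T^\infty g,g)$ is a generalized vector bundle morphism.

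For the isomorphism claim, I would suppose $g$ is a diffeomorphism and apply the Corollary above (together with Proposition \ref{theorem lifts of related connections remain related}, which guarantees that the lifted connection maps stay $g$-related): each $T^kg$ is then a vector bundle isomorphism with inverse $T^k(g^{-1})$, and since $\{T^k(g^{-1})\}_k$ is again a projective system of vector bundle morphisms, $T^\infty(g^{-1})=\varprojlim T^k(g^{-1})$ is a two-sided inverse of $T^\infty g$ in the category of generalized vector bundles. Running the same $\H$-space argument for $g^{-1}$ shows the inverse is also a generalized vector bundle morphism, so $(T^\infty g,g)$ is an isomorphism.

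I expect the only genuine subtlety — and the step I would be most careful about — to be the smoothness assertion for $T^\infty g_{\b\a}$: one must insist that the target is really $\L(\F,\F')$ via the factorization through $\H(\F,\F')$, because $\L(\F,\F')$ is not a Fr\'echet space and admits no direct projective-limit description, so the generalized differentiability has to be read off from the Banach factors $\H^k(\E^k,\E'^k)$ rather than attempted directly. Everything else — fibre preservation, the diagonal local form, the commutation with $\r_{ji}$ — is routine bookkeeping built on the finite-order theorems and the projective-limit constructions already recalled in this section.
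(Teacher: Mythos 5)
Your proposal is correct and follows essentially the same route as the paper: the paper's justification for Theorem \ref{Tinfty g becomes a vb morphism} is precisely the projective-limit setup displayed before the statement (the compatibility $\pi^{j,i}_N\o T^jg=T^ig\o\pi^{j,i}_M$, the projective systems of trivializations, the factorization of the local representative through $\mathcal{H}(\F,\F')$ and the map $\varepsilon$, with smoothness read off the Banach factors in the Leslie--Galanis sense), combined with Theorem \ref{T^kg becomes a v.b morphism} for the diagonal local form and the finite-order corollary for the isomorphism claim. Your added care about why one cannot work in $\mathcal{L}(\F,\F')$ directly matches the paper's own remark on that point.
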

\begin{Rem}
It is easy to check that $(T^\infty g,T^kg):(T^\infty M,\pi_M^{\infty,k},T^kM)\to (T^\infty N,\pi_N^{\infty,k},T^kN)$, $1\leq k<\infty$, is also a vector bundle morphism where $\pi_M^{\infty,k}$ maps the class $[\g,x]_\infty$ to $[\g,x]_k$.
\end{Rem}

%
%
\section{Applications and examples}
In this section first, we  settle our results to the special case of $f:M\to N$, where $f$ is an immersion and $N$ is a Riemannian Hilbert manifold.
Then we study the vector bundle dependence on convex combination of connection maps and finally the case of
manifold of $C^r$ maps  between manifolds $M$ and $N$ is considered.

\begin{Examp}
Let $M$ and $N$ be two smooth manifolds modeled on the Hilbert spaces $\E$ and $\F$ respectively and $f:M\to N$ be a smooth immersion.
Moreover suppose that $h$ be a Riemannian metric on $N$ with the Levi-Civita connection $\nabla_N$. It is known that the immersion $f$  induces a Riemannian metric $g$ on $M$ defined by
$$g(p)(u,v):=h(f(p))(d_pfu,d_pfv);~~\forall p\in M ~~\textrm{and} ~\forall u,v\in T_pM.$$
Denote by $\nabla_M$ the associated Levi-Civita connection of $g$. In what follows, we will show that $\nabla_M$ and $\nabla_N$ are $f$-related.

For $p\in M$ consider the charts $(U_\a,\phi_\a)$ and $(V_\b,\s_\b)$ around $p$ and $f(p):=q$ respectively. Setting
\begin{equation*}
g_\a(p_0):=g(p)\circ(d_{p_0}\p_\a^{-1}\times d_{p_0}\p_\a^{-1}):\E\times \E\to \R
\end{equation*}
and
\begin{equation*}
h_\b({q_0}):=h(q)\circ(d_{q_0}\s_\b^{-1}\times d_{q_0}\s_\b^{-1}):\F\times \F\to \R
\end{equation*}
we observe that
\begin{eqnarray}\label{local rep pf pull backmetric}
\nonumber g_\a(p_0)(u,v)&=&g(p)(d_{p_0}\p_\a^{-1}u, d_{p_0}\p_\a^{-1}v)\\
&=&h(f(p))(d_{p_0}f\o\p_\a^{-1}u, d_{p_0}f\o\p_\a^{-1}v)\\
%
%
\nonumber&=&h_\b(q_0)(d_{p_0}f_{\b\a}u,d_{p_0}f_{\b\a}v)
\end{eqnarray}
where $p_0:=\p(p)$, $q_0:=f_{\b\a}( p_0)$ and $f_{\b\a}:=\s_\b\o f\o\p_\a^{-1}$.
As a consequence of (\ref{local rep pf pull backmetric}) and  Leibniz's   rule we have
\begin{eqnarray*}
&& dg_\a(p_0).w(u,v)=dh_\b( q_0).d_{p_0}f_{\b\a}w\Big(d_{p_0}f_{\b\a}u,d_{p_0}f_{\b\a}v\Big)\\
&&+h_\b(q_0)\Big(d^2_{p_0}f_{\b\a}(w,u),d_{p_0}f_{\b\a}v\Big) +h_\b(q_0)\Big(d_{p_0}f_{\b\a}u,d_{p_0}^2f_{\b\a}(w,v)\Big)
\end{eqnarray*}
Now, using the  Koszul formula   (e.g. \cite{Flaschel, Kling}) we get
\begin{eqnarray*}
&g_\a(p_0)(\G_\a^M(p_0)(u,v),w)=\frac{1}{2}\{    dg_\a(p_0).u(v,w)  +dg_\a(p_0).v(u,w)&\\
&\hspace{40mm}-dg_\a(p_0).w(u,v)  \}
\end{eqnarray*}
\begin{eqnarray*}
&=\frac{1}{2}\Big\{  dh_\b( q_0).d_{p_0}f_{\b\a}u\Big(d_{p_0}f_{\b\a}v,d_{p_0}f_{\b\a}w\Big)     +   h_\b(q_0)\Big(d^2_{p_0}f_{\b\a}(u,v),d_{p_0}f_{\b\a}w\Big)&\\
&\hspace{40mm}+h_\b(q_0)\Big(d_{p_0}f_{\b\a}v,d_{p_0}^2f_{\b\a}(u,w)\Big)&
\end{eqnarray*}
\begin{eqnarray*}
&+dh_\b( q_0).d_{p_0}f_{\b\a}v\Big(d_{p_0}f_{\b\a}u,d_{p_0}f_{\b\a}w\Big)    +   h_\b(q_0)\Big(d^2_{p_0}f_{\b\a}(v,u),d_{p_0}f_{\b\a}w\Big)&\\
&\hspace{40mm}+h_\b(q_0)\Big(d_{p_0}f_{\b\a}u,d_{p_0}^2f_{\b\a}(v,w)\Big)&\\
&- dh_\b( q_0).d_{p_0}f_{\b\a}w\Big(d_{p_0}f_{\b\a}u,d_{p_0}f_{\b\a}v\Big)    -     h_\b(q_0)\Big(d^2_{p_0}f_{\b\a}(w,u),d_{p_0}f_{\b\a}v\Big)&\\
&\hspace{40mm}-h_\b(q_0)\Big(d_{p_0}f_{\b\a}u,d_{p_0}^2f_{\b\a}(w,v) \Big)\Big\}&
\end{eqnarray*}
\begin{eqnarray*}
&\hspace{-5mm}=h_\b(q_0)\Big(\G_\b^N(q_0)(d_{p_0}f_{\b\a}u,d_{p_0}f_{\b\a}v),d_{p_0}f_{\b\a}w\Big)   +    h_\b(q_0)\Big(d^2_{p_0}f_{\b\a}(u,v),d_{p_0}f_{\b\a}w\Big)&
\end{eqnarray*}
for any $u,v,w\in \E$. On the other hand equation (\ref{local rep pf pull backmetric}) now reads
\begin{eqnarray*}
g_\a(p_0)(\G_\a^M(p_0)(u,v),w)=h_\b(q_0)(d_{p_0}f_{\b\a}\G_\a^M(p_0)(u,v),d_{p_0}f_{\b\a}w).
\end{eqnarray*}
Since $g$ is  non-degenerate  we deduce that
\begin{eqnarray*}
d_{p_0}f_{\b\a} \G_\a^M(p_0)(u,v)=   \G_\b^N(q_0)(d_{p_0}f_{\b\a}u,d_{p_0}f_{\b\a}v)+d^2_{p_0}f_{\b\a}(u,v).
\end{eqnarray*}
However, the last equality is nothing but the local compatibility condition, described by remark \ref{Rem compatibility condition for connections when k=1}, for the $f$-related connections $\nabla_M$ and $\nabla_N$.

As a consequence of theorems \ref{osc k admits a vb}, \ref{T^kg becomes a v.b morphism} and \ref{Tinfty g becomes a vb morphism} for $k\in \N\cup\{\infty\}$, $T^kM$ and $T^kN$ admit vector bundle structures and  in this case $(T^kf,f):$ $(T^kM,\pi_M^k,M)$ $\to (T^kN,\pi_N^k,N)$  becomes  a vector bundle morphism.
Moreover if $f$ is an diffeomorphism (isometry) then, $T^kf$ is a vector bundle isomorphism. In this case, for $k\in\N$, with respect to the induced metrics introduced in section 3.3 of \cite{Suri Osck}, $T^kg$ is  also an isometry.
\end{Examp}
%
%
%
%
%
%
\begin{Examp}\label{Example convex combination}
Let $K=(\K,\dots,\KKK)$ and $\bar{K}=(\stackrel{1}{\bar K},\dots,\stackrel{k}{\bar K})$ be two connection maps on $T^kM$ and $0\leq \l\leq 1$.
We note that for any $1\leq a\leq k-1$,
\begin{eqnarray*}
(\l K+(1-\l)\bar K)^k\o \mathbb{J}^a   & =&    \l\KKK\o \mathbb{J}^a+(1-\l)\stackrel{k}{\bar K}\o \mathbb{J}^a\\
&=&   \l\stackrel{k-a}{ K} +(1-\l)\stackrel{k-a}{\bar K}\\
&=&    (\l K+(1-\l) \bar{K})^{k-a}
\end{eqnarray*}
and
\begin{eqnarray*}
(\l K+(1-\l)\bar K)^k\o \mathbb{J}^k   & =&    \l\KKK\o \mathbb{J}^k+(1-\l)\stackrel{k}{\bar K}\o \mathbb{J}^k\\
&=&   \l{\pi^k_M}_* +(1-\l){\pi^k_M}_*={\pi^k_M}_*
\end{eqnarray*}
that is the convex combination of $K$ and $\bar K$
$$\l K+(1-\l) \bar K =\Big(  \l \K+(1-\l)\stackrel{1}{\bar K},\dots, \l \KKK+(1-\l)\stackrel{k}{\bar K}   \Big)$$
is a connection map too. Moreover it is easily seen that for any $u=(x,\xi_1,\dots,\xi_k)$ and $(u,y,\e_1,\dots,\e_k)\in T_uT^kM$
\begin{eqnarray*}
&&(\l K+(1-\l)\bar K)|_{U_\a}(u;y,\e_1,...,\e_k)=\bigoplus_{i=1}^k\Big(x, \e_i+ \\
&&   (   \l\m_\a(u) +(1-\l)\stackrel{1}{\bar M}_\a(u)   )\e_{i-1}+
\dots+   (\l \stackrel{i}{M}_\a(u) +(1-\l) \stackrel{i}{\bar M}_\a(u)   ) y\Big).
\end{eqnarray*}
As a consequence of theorem \ref{osc k admits a vb}, for any $x\in M$
\begin{eqnarray*}
(T^kM_{\l K+(1-\l)\bar K})_x&=&(T^kM_{\l K})_x+(T^kM_{(1-\l)\bar K})_x\\
&=&  \l(T^kM_{ K})_x+(1-\l)(T^kM_{\bar K})_x
\end{eqnarray*}
where $ T^kM_K$, $T^kM_{\bar K}$ and $T^kM_{\l K+(1-\l)\bar K}$ denote the vector bundle structures on $\pi_M^k:T^kM\to M$ induced by the
connection maps $K$, $\bar K$ and $\l K+(1-\l)\bar K$ respectively.

This last result means that the fibres of the vector bundle structure induced by the convex combination $\l K+(1-\l)\bar K$,
is the convex combination of fibres of the vector bundle structures induced by $K$ and $\bar K$.

A similar argument holds for any  convex combination $\sum_{i=1}^n\l_i K_i$ of connection maps.
\end{Examp}
%
%
%
%
\begin{Examp}
Let $N$ be a $C^\infty$ compact manifold and $M$ be a $C^\infty $ Banach (possibly infinite dimensional) manifold with a linear connection $\nabla_M$.  According to \cite{Eli} $C^r(N,M)$, the space of all $C^r$ maps $0\leq r<\infty$ from $N$ to $M$, forms a Banach manifold with the following charts. Let $exp:\mathcal{O}\subset TM\to M$ be the exponential map corresponding to the linear connection $\nabla_M$. Moreover suppose that $\mathcal{D}$ be an open neighborhood of the zero section in $TM$ such that $(\pi_M^1,exp)|_\mathcal{D}$ form   $\mathcal{D}$ to $(\pi_M^1,exp)(\mathcal{D})\subset M\times M$ is a diffeomorphism.   For the $C^r$ map $h:N\to M$ the chart $(\p_h,U_h)$ defined by
\begin{equation*}
C^r(exp):C^r(h^*\mathcal{D})  \to  C^r(N,M) ~;~ \xi \mto exp\o \xi.
\end{equation*}
In the notation above $C^r(h^*\mathcal{D})$ is the set of all sections $\xi:N\to TM$ with the property $\pi_M^1\o\xi=h$. In this case $C^r(h^*\mathcal{D})$ becomes a Banach space with the norm
$$\|\xi\|_{C^r}=\sum_{j=0}^r\|\nabla^j\xi\|_{C^0}:=\sum_{j=0}^r sup\|\nabla^j\xi(p)\|_{p\in N}.$$
which serves as the model space of $C^r(N,M)$.

Moreover the connection $\nabla_M$ induces a connection on $C^r(N,M)$ with the connection map $C^r(\nabla_M):TTC^r(N,M)\simeq C^r(N,TTM)\to C^r(N,TM)\simeq TC^r(N,M)$ which maps $A\in C^r(N,TTM)$ to $C^r(\nabla_M)(A)=\nabla_M\o A$. Since $\nabla_M$ is a linear connection, so $C^r(\nabla_M)$ also is a linear connection (\cite{Eli} Theorem 5.4).

According to theorem \ref{osc k admits a vb}, $T^kM$ and $T^kC^r(N,M)$ admits vector bundle structures on $M$ and $C^r(N,M)$ respectively. Moreover
$T^kC^r(N,M)\simeq \oplus_{j=1}^kTC^r(N,M)$ and $C^r(N,T^kM)\simeq C^r(N,\oplus_{j=1}^kTM)$ are isomorphic vector bundles over $C^r(N,M)$.

For the Banach manifolds $M$ and $M'$ and the smooth map $g:M\to M'$ the map $C^r(g):C^r(N,M)\to C^r(N,M')$ defined by $f\mto g\o f$ is differentiable and $TC^r(g)=C^r(Tg)$ \cite{Eli}.

Now, suppose that $\nabla_M$ and $\nabla_{M'}$ be two $g$-related connections on $M$ and $M'$ respectively. Then
\begin{eqnarray*}
C^r(\nabla_{M'})\o TTC^r(g)&=&C^r(\nabla_{M'})\o C^r(TTg)=C^r(\nabla_{M'}\o TTg)\\
&=&C^r(Tg\o\nabla_M)=TC^r(g)\o C^r(\nabla_M)
\end{eqnarray*}
that is $C^r(\nabla_M)$ and $C^r(\nabla_{M'})$ are $C^r(g)$-related. As a consequence of theorem \ref{T^kg becomes a v.b morphism}, $T^kg:T^kM\to T^kM'$ and $T^kC^r(g):T^kC^r(N,M)\to T^kC^r(N,M')$ are vector bundle morphisms.
\end{Examp}
%
%
%
%
\section{Appendix}
In this section,  using the chain rule formula (\ref{higher order chain rule}) we  prove  lemma \ref{Lem. d^k/ds d^k-1 t(f o c)(t,s)}.
For $(x,\xi_1,\dots,\xi_k)\in U_\a\times\E^k$,
define the curve $\bar{\mu}_k$ inductively as in section \ref{section Tkg as a vb morphism} by $\bar{\mu}_1(t)=x+t\xi_1$, $\bar{\mu}_2(t)=\bar{\mu}_1(t)+\frac{t^2}{2}\{2\xi_2-\st{1}{M}_\a(x,\xi_1)\xi_1\}$ and for $i\geq 2$,
\begin{eqnarray*}
&{\bar{\mu}}_i(t)=\bar{\mu}_{i-1}(t)+\frac{t^i}{i}\{i\xi_i-\m_\a(x,\xi_1)\frac{\bar{\mu}^{(i-1)}_{i-1}(0)}{(i-2)!}
-\dots&\\
&\stackrel{i-1}{M}_\a(x,\xi_1,\frac{1}{2!}\bar{\mu}_{i-1}^{(2)}(0),...,\frac{1}{(i-1)!}\bar{\mu}_{i-1}^{(i-1)}(0))\xi_1\}.&
\end{eqnarray*}
\begin{Lem}
Let $\bar{\mu}:=\bar{\mu}_k$ be the  map  defined above, $\mathcal{O}\subseteq \E$ be open  and $f:\mathcal{O}\to \E$ be any smooth map. Then
\begin{equation}\label{Eq. d^k/ds d^k-1 t(f o c)(t,s)}
\frac{\partial^{k}}{\partial s\partial t^{k-1}}(f\o \bar{d}_k)(t,s)|_{t=s=0}=(f\o \bar{\mu})^{(k)}(t)|_{t=0}
\end{equation}
where
\begin{equation*}
\bar{d}_k:(-\eps,\eps)^2\to \E ~;~~ (t,s)\mto \sum_{i=1}^{k-1}\frac{t^i}{i!}(\bar{\mu}^{(i)}(0)+s\bar{\mu}^{(i+1)}(0)).
\end{equation*}
\end{Lem}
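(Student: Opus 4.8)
The plan is to verify the identity by a direct application of the higher order chain rule \eqref{higher order chain rule} to both sides and to exploit the fact that the two curves $t\mapsto \bar d_k(t,s)|_{s=0} = \sum_{i=1}^{k-1}\frac{t^i}{i!}\bar\mu^{(i)}(0)$ and $\bar\mu$ agree to order $k-1$ at $t=0$. First I would observe that, by construction, $\bar d_k(t,0)$ is the degree-$(k-1)$ Taylor polynomial of $\bar\mu$ at $0$, so $\frac{\partial^j}{\partial t^j}(\bar d_k)(0,0) = \bar\mu^{(j)}(0)$ for all $0\le j\le k-1$, while the derivative $\frac{\partial}{\partial s}\bar d_k(t,0) = \sum_{i=1}^{k-1}\frac{t^i}{i!}\bar\mu^{(i+1)}(0)$ has $\frac{\partial^j}{\partial t^j}$-derivative at $t=0$ equal to $\bar\mu^{(j+1)}(0)$ for $0\le j\le k-1$. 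These are the only jets of $\bar d_k$ that can enter the mixed partial $\frac{\partial^k}{\partial s\,\partial t^{k-1}}$, since in that operator $s$ appears to first order and $t$ to order $k-1$.

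Next I would expand the left-hand side. Writing $c(t) := \bar d_k(t,s)$ for fixed $s$ and applying \eqref{higher order chain rule} to $(f\circ c)^{(k-1)}$, one gets a sum over tuples $(l_1,\dots,l_j)$ with $l_1+\dots+l_j = k-1$ of terms $a^{k-1}_{(l_1,\dots,l_j)}\, d^jf(c(0))\big(c^{(l_1)}(0),\dots,c^{(l_j)}(0)\big)$; then applying $\frac{\partial}{\partial s}$ at $s=0$ and using the product rule distributes the $s$-derivative onto $c(0) = \bar d_k(0,s)$ (which is constant in $s$, hence contributes nothing) and onto each factor $c^{(l_m)}(0) = \frac{\partial^{l_m}}{\partial t^{l_m}}\bar d_k(0,s)$, whose $s$-derivative at $s=0$ is $\bar\mu^{(l_m+1)}(0)$. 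Meanwhile expanding the right-hand side $(f\circ\bar\mu)^{(k)}(0)$ directly via \eqref{higher order chain rule} gives a sum over tuples summing to $k$. The combinatorial heart of the argument is then to match these two expansions term by term: a tuple $(l_1,\dots,l_j)$ summing to $k$ arises on the right, and on the left it arises by taking a tuple summing to $k-1$ and incrementing exactly one entry by $1$ (the one hit by $\partial_s$); one must check that the coefficients $a^k$ and the $a^{k-1}$ coefficients (weighted by the number of ways the $\partial_s$ can land to produce a given tuple) agree, which is precisely the Leibniz-type recursion satisfied by the multinomial coefficients $a^k_{(l_1,\dots,l_j)} = \frac{k!}{l_1!\cdots l_j!\,m_1!\cdots m_k!}$.

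I would carry this out by induction on $k$, which is cleaner than the raw combinatorics: the base case $k=2$ is a short direct check, and for the inductive step one differentiates the identity for $\bar d_{k-1}$, noting that $\bar d_k(t,s)$ and $\bar d_{k-1}(t,s)$ differ only by the single monomial $\frac{t^{k-1}}{(k-1)!}(\bar\mu^{(k-1)}(0)+s\bar\mu^{(k)}(0))$, whose contribution to the mixed partial $\frac{\partial^k}{\partial s\,\partial t^{k-1}}$ isolates exactly the ``new'' term $\bar\mu^{(k)}(0)$ needed to pass from $(f\circ\bar\mu)^{(k-1)}$ to $(f\circ\bar\mu)^{(k)}$. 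Alternatively, and perhaps most economically, I would note the general fact that $\frac{\partial^k}{\partial s\,\partial t^{k-1}}(f\circ \bar d_k)(0,0)$ depends only on the $(k-1)$-jet in $t$ of $\bar d_k(\cdot,0)$ together with the $(k-1)$-jet in $t$ of $\partial_s \bar d_k(\cdot,0)$, and that these jets coincide with those of the curve $t\mapsto \bar\mu(t)$ and $t\mapsto \bar\mu'(t)$ respectively; since for the genuine curve $\bar\mu$ one has the elementary identity $\frac{\partial^k}{\partial s\,\partial t^{k-1}}(f\circ c_{\bar\mu})(0,0) = (f\circ\bar\mu)^{(k)}(0)$ where $c_{\bar\mu}(t,s) := \bar\mu(t) + s\,\bar\mu'(t)$ up to order $k$, the result follows. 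The main obstacle is bookkeeping: making the term-by-term identification of the two chain-rule expansions rigorous without drowning in index notation, which is exactly why the jet-truncation viewpoint or the induction is worth setting up carefully first.
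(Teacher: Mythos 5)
Your overall strategy --- expand both sides with the higher-order chain rule and match terms --- is the same direct computation the paper carries out in its appendix, but there is a concrete error in your bookkeeping that breaks the term-by-term matching. The curve $\bar d_k$ must carry the zeroth-order term $x+s\xi_1$ (the printed formula starts the sum at $i=1$, but the paper's own proof evaluates $df$ and $d^jf$ at $x+s\xi_1$; without this term the two sides are not even based at the same point, since the left side would involve $d^jf(0)$ while the right side involves $d^jf(\bar\mu(0))=d^jf(x)$). Once that term is restored, your claim that ``$c(0)=\bar d_k(0,s)$ is constant in $s$, hence contributes nothing'' is false: $\partial_s$ must also hit the base point, producing terms $d^{j+1}f(x)\big(\bar\mu^{(1)}(0),\dots\big)$, i.e.\ the tuples summing to $k$ obtained from a tuple summing to $k-1$ by \emph{appending} a new part equal to $1$, not by incrementing an existing part. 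Your matching rule (``a tuple summing to $k$ arises by incrementing exactly one entry of a tuple summing to $k-1$'') therefore misses exactly those terms; the most visible casualty is $d^kf(x)\big(\bar\mu^{(1)}(0),\dots,\bar\mu^{(1)}(0)\big)$, which occurs with coefficient $1$ in $(f\o\bar\mu)^{(k)}(0)$ and cannot arise from any $(k-1)$-tuple by incrementation. In the paper's computation this is precisely the term produced by differentiating the argument $x+s\xi_1$ of $d^{k-1}f$ with respect to $s$.

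Your closing ``jet-truncation'' alternative is in fact the cleanest way to finish and is essentially correct, but note that it is incompatible with the ``contributes nothing'' claim: it needs the $0$-jets to match, i.e.\ $\bar d_k(0,s)=x+s\,\bar\mu^{(1)}(0)$, which is exactly the non-constant base point. With that in place the argument is short: since $\bar d_k$ is affine in $s$, the mixed partial depends only on the $(k-1)$-jets in $t$ of $\bar d_k(\cdot,0)$ and of $\partial_s\bar d_k(\cdot,0)$, and these agree with the jets of $\bar\mu$ and $\bar\mu'$; for $c(t,s)=\bar\mu(t)+s\bar\mu'(t)$ one has $\partial_s(f\o c)|_{s=0}=df(\bar\mu)\bar\mu'=(f\o\bar\mu)'$, and applying $\partial_t^{k-1}$ at $t=0$ gives $(f\o\bar\mu)^{(k)}(0)$. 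If you repair the base-point issue and run either the corrected expansion or this jet argument, the proof goes through; as written, the combinatorial identity you invoke does not hold.
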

\begin{proof}
Using the chain rule formula (\ref{higher order chain rule}) we observe that
\begin{eqnarray*}
&& \frac{\partial^{k}}{\partial s\partial t^{k-1}}(f\o \bar{d}_k)(t,s)|_{t=s=0}\\
&=&\frac{\partial}{\partial s}|_{s=0}
\{d f(x+s\xi_1)[\bar{\mu}^{(k-1)}(0)+ s \bar{\mu}^{(k)}(0)]\\
&&+ \sum_{l_1+l_2=k-1} a^{k-1}_{l_1,l_2} d^2f (x+s\xi_1)[\bar{\mu}^{(l_1)}(0)+s\bar{\mu}^{(l_1+1)}(0),\bar{\mu}^{(l_2)}(0)+s\bar{\mu}^{(l_2+1)}(0)]+\\
&&\dots+d^{k-1}(x+s\xi_1)[\bar{\mu}^{(1)}(0)+s\bar{\mu}^{(2)}(0),\dots,\bar{\mu}^{(1)}(0)+s\bar{\mu}^{(2)}(0)]\}\\
&=&d f(x)[\bar{\mu}^{(k)}(0)] +d^2f(x)[\bar{\mu}^{(1)}(0),\bar{\mu}^{(k-1)}(0)]+ (k-1)\{d^2f(x)[\bar{\mu}^{(2)}(0),\\
&&\bar{\mu}^{(k-2)}(0)]
+d^2f(x)[\bar{\mu}^{(1)}(0),\bar{\mu}^{(k-1)}(0)] + d^3f(x)[\bar{\mu}^{(1)}(0),\bar{\mu}^{(1)}(0),
\\
&&\bar{\mu}^{(k-2)}(0)]\} +\dots
+(k-1)d^{k-1}f(x)[\bar{\mu}^{(1)}(0),\dots,\bar{\mu}^{(1)}(0),\bar{\mu}^{(2)}(0)]\\
&&+d^kf(x)[\bar{\mu}^{(1)}(0),\dots,\bar{\mu}^{(1)}(0)]\\
&=& d f(x)[\bar{\mu}^{(k)}(0)]+ kd^2f(x)[\bar{\mu}^{(1)}(0),\bar{\mu}^{(k-1)}(0)]+\frac{k(k-1)}{2}d^2f(x)[\bar{\mu}^{(2)}(0),\\
&&\bar{\mu}^{(k-2)}(0)]+\dots+d^kf(x)[\bar{\mu}^{(1)}(0),\dots,\bar{\mu}^{(1)}(0)]\\
&=&(f\o \bar{\mu})^{(k)}(0)
\end{eqnarray*}
as we claimed.
\end{proof}
%
%
Finally, we leave it to the reader to verify that
\begin{Lem}\label{Lem 2}
For any differentiable function $f:O\subseteq \E\to \E$ and the maps $\bar{c}_1(t,s,h):=x+h\xi_1+s y+t\xi_1+\dots +t^k\xi_k$, $\bar\g_1(t,h):=\bar{c}_1(t,0,h)$, $\dots$,
$$\bar{c}_i(t,s,h):=x+sy +t\xi_1+\dots +t^{i-2}\xi_{i-2}+t^{i-1}(\xi_{i-1}+hi\xi_i)+t^i\xi_i+\dots +t^k\xi_k,$$
$\bar\g_i(t,h):=\bar{c}_i(t,0,h)$, $1\leq i\leq k$, and $\bar{c}(t,s)=x+sy+t\xi_1+\dots +t^k\xi_k$  the following properties hold.\\
\textbf{i.}
\begin{eqnarray*}
\frac{\partial^{j+1}}{\partial h \partial s \partial t^{j-1}}\sum_{i=1}^k (f\o \bar{c}_i)(t,s,h)|_{t=s=h=0}=\frac{\partial^{j+1}}{\partial
s\partial t^{j}} (f\o \bar{c})(t,s)|_{t=s=0}
\end{eqnarray*}
\textbf{ii.}
\begin{eqnarray*}
\frac{\partial^{j}}{\partial h  \partial t^{j-1}}\sum_{i=1}^k (f\o \bar{c}_i)(t,s,h)|_{t=s=h=0}=\frac{\partial^{j}}{\partial t^{j}} (f\o \bar{c})(t,s)|_{t=s=0}
\end{eqnarray*}
$j=1,\dots,k$.
\end{Lem}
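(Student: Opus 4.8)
The plan is to exploit two elementary structural features of the curves $\bar{c}_i$ and thereby reduce both identities to a single application of the chain rule together with one algebraic identity. First I would observe that each $\bar{c}_i(t,s,h)$ is \emph{affine} in the variable $h$: writing $\bar{c}_i(t,s,h)=\bar{c}_i(t,s,0)+h\,w_i(t)$, one reads off directly from the definitions that $\bar{c}_i(t,s,0)=\bar{c}(t,s)$ for every $i$, so that the $h=0$ slices of all the $\bar{c}_i$ collapse onto the single curve $\bar{c}$, while
$$w_1(t)=\xi_1, \qquad w_i(t)=i\,t^{i-1}\xi_i \quad (2\leq i\leq k).$$
This is the only place where the precise shape of the $\bar{c}_i$ is used (the insertion of $h\xi_1$ into $\bar{c}_1$ and of $h\,i\,\xi_i$ into the $t^{i-1}$-coefficient of $\bar{c}_i$).

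The second ingredient is the key algebraic identity
$$\sum_{i=1}^k w_i(t)=\xi_1+\sum_{i=2}^k i\,t^{i-1}\xi_i=\frac{\partial}{\partial t}\bar{c}(t,s),$$
the last equality holding because $\partial_t$ annihilates the $x+sy$ part of $\bar{c}$. Since $\partial_h\bar{c}_i=w_i(t)$ is independent of $h$, the chain rule gives $\partial_h(f\o\bar{c}_i)(t,s,h)=df(\bar{c}_i(t,s,h))\big[w_i(t)\big]$; evaluating at $h=0$, summing over $i$, and using the identity above together with linearity of $df(\bar{c})[\,\cdot\,]$ in its argument yields
$$\sum_{i=1}^k\partial_h(f\o\bar{c}_i)(t,s,0)=df(\bar{c}(t,s))\Big[\sum_{i=1}^k w_i(t)\Big]=df(\bar{c}(t,s))\big[\partial_t\bar{c}(t,s)\big]=\frac{\partial}{\partial t}(f\o\bar{c})(t,s).$$

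To finish I would assemble the remaining derivatives. Because $f$ and all the $\bar{c}_i$ are smooth, every mixed partial commutes, so I may move $\partial_h$ to the innermost position; once $\partial_h$ has been taken, the surviving operators $\partial_s$ and $\partial_t$ no longer involve $h$, hence evaluating at $h=0$ commutes with them. Thus $\frac{\partial^{j}}{\partial h\partial t^{j-1}}\sum_i(f\o\bar{c}_i)|_{t=s=h=0}$ equals $\partial_t^{\,j-1}$ applied to the identity just obtained and evaluated at the origin, namely $\partial_t^{\,j}(f\o\bar{c})|_{t=s=0}$, which is part (ii); inserting one extra $\partial_s$ produces $\partial_s\partial_t^{\,j}(f\o\bar{c})|_{t=s=0}$, which is part (i), in the same stroke. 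The only delicate point — and the main obstacle — is this bookkeeping of the order of differentiation and substitution at the origin; it is settled entirely by smoothness (Schwarz's theorem together with the fact that freezing one variable commutes with differentiation in the others), so that the genuine content of the lemma is the single identity $\sum_i w_i=\partial_t\bar{c}$.
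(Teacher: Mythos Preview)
Your argument is correct and cleanly organized. The key observations---that every $\bar{c}_i$ is affine in $h$ with $h=0$ slice equal to $\bar{c}$, and that the ``velocities'' $w_i(t)=\partial_h\bar{c}_i$ sum to $\partial_t\bar{c}$---reduce the lemma to a single chain-rule computation followed by routine commutation of partial derivatives with evaluation at $h=0$. Both reductions are valid exactly as you state them.

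As for comparison with the paper: there is nothing to compare, since the paper explicitly leaves this lemma to the reader (``Finally, we leave it to the reader to verify that\dots''). Your proof is therefore a complete and efficient filling of that gap. If anything, your approach is more conceptual than a direct Fa\`a di Bruno expansion would be: by isolating the identity $\sum_i w_i=\partial_t\bar{c}$ you make transparent \emph{why} the particular perturbations $h\xi_1$ and $h\,i\,t^{i-1}\xi_i$ were chosen in the definitions of the $\bar{c}_i$---they are precisely the pieces of $\partial_t\bar{c}$, distributed among the $k$ curves so that summing the $h$-derivatives reproduces one extra $t$-derivative.
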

%

%
%
\bigskip

\end{document}